\DeclareMathOperator{\id}{id}
\DeclareMathOperator{\en}{End}
\newcommand*{\qqs}{\ensuremath{\Join}}
\newcommand*{\s}{\ensuremath{\sigma}}
\newcommand*{\ot}{\ensuremath{\otimes}}
\newcommand*{\ve}{\varepsilon}
\newcommand*{\uot}{\ensuremath{\underline{\otimes}}}
\newcommand*{\gf}{\mathbb{K}}
\newcommand*{\sym}{\mathfrak{S}}
\newcommand*{\sq}{\square}
\newcommand*{\cyd}{{}^H_H\mathcal{YD}}
\newcommand*{\chb}{{}^H_H\mathcal{M}_H^H}
\newcommand*{\ts}{\otimes}
\newcommand*{\ra}{\rightarrow}
\newcommand*{\vp}{\varphi}
\newcommand*{\wt}{\widetilde}
\newtheorem{theorem}{Theorem}[section]
\newtheorem{lemma}[theorem]{Lemma}
\newtheorem{proposition}[theorem]{Proposition}
\newtheorem{corollary}[theorem]{Corollary}
\theoremstyle{definition}
\newtheorem{definition}[theorem]{Definition}
\newtheorem{example}[theorem]{Example}
\theoremstyle{remark}
\newtheorem{remark}[theorem]{Remark}
\numberwithin{equation}{section}
\begin{document}

\title{Cofree Hopf algebras on Hopf bimodule algebras}

\author{Xin Fang}
\address{Universit\'e Paris Diderot-Paris VII, Institut de Math\'ematiques de Jussieu - Paris Rive Gauche, B\^atiment Sophie Germain, Case 7012, 75205 Paris Cedex 13, France.}
\email{xinfang.math@gmail.com}

\author{Run-Qiang Jian}
\address{School of Computer Science, Dongguan University
of Technology, 1, Daxue Road, Songshan Lake, 523808, Dongguan, P.
R. China}
\email{jian.math@gmail.com}
\thanks{}

\subjclass[2010]{16T05, 16W99}

\date{}


\keywords{Hopf bimodule algebra, cotensor coalgebra, quantum
quasi-shuffle algbra, Rota-Baxter algebra}

\begin{abstract}
We investigate a Hopf algebra structure on the cotensor coalgebra
associated to a Hopf bimodule algebra which contains universal
version of Clifford algebras and quantum groups as examples. It is
shown to be the bosonization of the quantum quasi-shuffle algebra
built on the space of its right coinvariants. The universal property and a Rota-Baxter algebra structure are established on this new algebra.
\end{abstract}

\maketitle

\section{Introduction}
The notion of a cotensor coalgebra was first introduced by Nichols
(\cite{N}) more than three decades ago. It is the dual version of
tensor algebras built on bimodules over algebras. By using the
universal property of cotensor coalgebras, Nichols constructed an
algebra structure on the underlying space of the cotensor
coalgebra. He investigated the subalgebra generated by elements of
degree 0 and 1, which is called the Nichols algebra or quantum shuffle algebra nowadays. This
sort of algebras lead to the quantum shuffle interpretation of
quantum groups (\cite{Ro}) and a series of works on the
classification of finite dimensional pointed Hopf algebras over
finite abelian groups of order prime to $210$ (cf. \cite{AS1},
\cite{AS2} and the references therein). These unexpected relations
with other mathematical objects such as operads and logarithm conformal field theory evoke great
interests on cotensor coalgebras. 
\par
Nevertheless, the study of this
subject is just at the beginning. On one hand, the Nichols algebra
itself admits various algebraic structures and many open questions. One may
expect that new properties on these objects about this subject and
connections with other algebraic or geometric objects will be discovered. On
the other hand, we could generalize the Nichols algebra in a much
bigger framework. For the second direction, the first step was
advised by Rosso. He considered a Hopf bimodule $M$ with an
associative product $m$. After imposing some compatibilities
between the multiplication and the Hopf bimodule structure, he
constructed a Hopf algebra structure on the cotensor coalgebra of
this Hopf bimodule algebra $(M,m)$. We call this new algebra the
cofree Hopf algebra on the Hopf bimodule algebra.
\par
 As in the
case of quantum shuffle algebras, these Hopf algebras bring
surprising applications to quantum groups. Recently, Rosso and the
first author showed that the whole quantum group associated to  a symmetrizable Kac-Moody Lie algebra can be realized as a natural
quotient of the subalgebra of the cofree Hopf algebra built on
some well-chosen Hopf bimodule algebras generated by elements of degree $0$
and $1$ (\cite{FR}). This provides a "global" construction
of the whole quantum group, not just the positive part. We expect that this result could give some new insights on quantum groups. Moreover, Rosso's construction is
entirely generalized in the framework of multi-brace cotensor Hopf
algebras and the algebra structures on cotensor coalgebras are
classified there in spirit of Loday and Ronco \cite{LR}.
\par
This new algebra built on a Hopf bimodule algebra $M$ has a richer combinatorial structure and therefore is complicated. The explicit formula of this new multiplication involves all
structures on $M$ and our first task is to understand it. Analogue to the relation between cofree
Hopf algebras on Hopf bimodules and quantum shuffle algebras
(\cite{Ro}) the quantum quasi-shuffle algebra \cite{JRZ} can be found inside of a cofree Hopf algebra on a Hopf bimodule algebra. More
precisely, the new algebra is isomorphic, as a Hopf algebra, to
the bosonization of quantum quasi-shuffle. These results should be known by Rosso, but he did not published any proof.
Due to the importance of this algebra, it seems quite necessary to
write down a complete and self-contained demonstration of this observation. 
\par
In this
paper, after a careful analysis, we use a universal property
showed in \cite{JRZ} to provide a detailed proof.
Thanks to this isomorphism, we can concentrate our study on
quantum quasi-shuffle algebras, which are
the quantization of the classical quasi-shuffle algebras
introduced by Newman and Radford \cite{NR}, Hoffmann (\cite{H}),
Guo and Keigher (\cite{GK}) independently. They are constructed as
a special case of quantum multi-brace algebras (\cite{JR}) and
have some interesting properties and applications to other
mathematical objects (cf. \cite{J}, \cite{J2}, \cite{JRZ}). We can
extend the results about quantum quasi-shuffle algebras to the
cofree Hopf algebras on Hopf bimodule algebras. These extensions are
significant for the reason that the former ones are not real Hopf
algebras but the latter ones are. For example, the second author
constructed in \cite{J} a Rota-Baxter algebra structure on quantum
quasi-shuffle algebras, which can
be extended to the bosonized Hopf algebra and then gives a Rota-Baxter
algebra structure on the cofree Hopf algebra built on  Hopf
bimodule algebras.

We also establish some important tools for the further study of
this new algebra, such as universal property. Finally, we provide
examples of our new algebras, such as universal Clifford algebras
and universal quantum groups. These examples demonstrate that
the cofree Hopf algebras on Hopf bimodule algebras include many
important algebras as special case. As we mentioned above that
these new objects admit plenty of structures, this paper can be viewed
as an opening of this subject.

This paper is organized as follows. We provide a detailed study of
the algebra structure built on the cofree Hopf algebras on Hopf
bimodule algebras in Section 2. In Section 3, a
Rota-Baxter algebra structure on the new algebra, as well as a
universal property, are established. Several examples including
universal Clifford algebras and universal quantum groups are provided in
Section 4.

\noindent\textbf{Notations.} In this paper, we denote by $\gf$ a
ground field of characteristic 0. All vector spaces, algebras,
coalgebras and tensor products we discuss are defined over $\gf$
if not specified otherwise. For a vector space $V$, we denote by
$T(V)$ the tensor vector space of $V$, by $\ot$ the tensor product
within $T(V)$, and by $\uot$ the one between $T(V)$ and $T(V)$.

A braiding $\s$ on a vector space $V$ is an invertible linear map
in $\en(V\ot  V)$ satisfying the quantum Yang-Baxter equation on
$V^{\ot  3}$: $$(\s \ot \id_{V})(\id_{V}\ot \s )(\s \ot
\id_{V})=(\id_{V}\ot  \s )(\s \ot \id_{V})(\id_{V}\ot \s ).$$ A
braided vector space $(V,\s )$ is a vector space $V$ equipped with
a braiding $\s $. For any $n\in \mathbb{N}$ and $1\leq i\leq n-1$,
we denote by $\s _i$ the operator $\id_V^{\ot  i-1}\ot \s \ot
\id_V^{\ot  n-i-1}\in \en(V^{\ot  n})$. We denote by $\sym_{n}$
the symmetric group acting on the set $\{1,2,\ldots,n\}$ and by
$s_{i}$, $1\leq i\leq n-1$, the standard generators of $\sym_{n}$
permuting $i$ and $i+1$. For any $w\in \sym_{n}$, we denote by
$T^\s _w$ the corresponding lift of $w$ in the braid group
$\mathfrak{B}_n$, defined as follows: if $w=s_{i_1}\cdots s_{i_l}$
is any reduced expression of $w$, then $T^\s _w=\s _{i_1}\cdots \s
_{i_l}$. By Theorem 4.12 in \cite{KT}, it is well-defined.

We define $\beta:T(V)\uot T(V)\rightarrow T(V)\uot T(V)$ by
requiring that the restriction of $\beta$ on $V^{\ot i}\uot V^{\ot
j}$, denoted by $\beta_{ij}$, is $T^\s _{\chi_{ij}}$, where
\[\chi_{ij}=\left(\begin{array}{cccccccc}
1&2&\cdots&i&i+1&i+2&\cdots & i+j\\
j+1&j+2&\cdots&j+i&1& 2 &\cdots & j
\end{array}\right)\in \sym_{i+j},\] for any $i,j\geq 1$. For convenience, we denote by
$\beta_{0i}$ and $\beta_{i0}$ the usual flip.

Let $(H,\Delta,\varepsilon, S)$ be a Hopf algebra. We denote
$\Delta^{(0)}=\id$, $\Delta^{(1)}=\Delta$, and
$\Delta^{(n+1)}=(\Delta^{(n)}\ot \id)\Delta$ recursively for
$n\geq 1$. The coradical $\mathrm{corad}(H)$ of $H$ is the sum of
all simple subcoalgebras of $H$ (cf. \cite{S}).

In the sequel, we adopt Sweedler's notation for coproducts and
comodule structure maps. For any $h\in H$,
$$\Delta(h)=\sum h_{(1)}\ot  h_{(2)}.$$For a left $H$-comodule $(M,\delta_L)$ (resp. right $H$-comodule $(M,\delta_R)$) and any $ m\in M$,
$$\delta_L(m)=\sum m_{(-1)}\ot  m_{(0)}\ \ \  (\text{resp.}\ \ \delta_R(m)=\sum m_{(0)}\ot  m_{(1)}).$$

\section{Cofree Hopf algebras on Hopf bimodule algebras}
In this section, we always assume
$(H,\mu_H,\Delta_H,\varepsilon_H,S)$ is a Hopf algebra.

\subsection{Algebra objects in $\cyd$ and $\chb$}
We start by recalling the equivalence of $H$-Yetter-Drinfeld module categories and $H$-Hopf bimodule categories. Our basic references are \cite{AS1}, \cite{Ro} and \cite{S}.

\subsubsection{Yetter-Drinfeld module algebras}

\begin{definition}A \emph{(left) $H$-Yetter-Drinfeld
module} is a vector space $V$ equipped simultaneously with a left
$H$-module structure $\cdot$ and a left $H$-comodule structure
such that whenever $h\in H$ and $v\in V$,
$$\sum h_{(1)}v_{(-1)}\otimes h_{(2)}\cdot v_{(0)}=\sum
(h_{(1)}\cdot v)_{(-1)}h_{(2)}\otimes (h_{(1)}\cdot v)_{(0)}.
$$\end{definition}

The category of $H$-Yetter-Drinfeld modules, denoted by $\cyd$,
consists of the following data:
\begin{enumerate}
\item objects in $\cyd$ are $H$-Yetter-Drinfeld modules;
\item morphisms in $\cyd$ are linear maps which are both module and
comodule morphisms.
\end{enumerate}

Together with the usual tensor product, $\cyd$ is a tensor
category. Moreover it is braided with the following commutativity
constraint: for any $V,W\in\cyd$,
\[
\s:V\ot W \rightarrow W\ot V,\ \
 v\ot w\mapsto\sum v_{(-1)}\cdot w\ot v_{(0)}.
\]

A unital algebra $(V,m,u)$ in $\cyd$ is a unital associative
algebra such that the underlying space $V$ is an object of $\cyd$
and its multiplication $m$ and unit $u$ are morphisms in $\cyd$.

\subsubsection{Hopf bimodule algebras}

\begin{definition}
An \emph{$H$-Hopf bimodule} is a vector space $M$ equipped simultaneously with $H$-bimodule and $H$-bicomodule structures (with left coaction $\delta_L$ and right
coaction $\delta_R$) such that both $\delta_L$ and $\delta_R$ are $H$-bimodule maps.
\end{definition}

The category of $H$-Hopf bimodules, denoted by $\chb$, consists of
the following data:\begin{enumerate}
\item objects in $\chb$ are $H$-Hopf bimodules;
\item morphisms in $\chb$ are linear maps which are both $H$-bimodule and $H$-bicomodule morphisms.
\end{enumerate}

Taking tensor product over $H$, $\chb$ is a tensor category: the bimodule structure is on two extremal terms and the bicomodule structure arises from the tensor product. In his seminal paper \cite{W}, Woronowicz showed that $\chb$ is
braided.

As in the case of Yetter-Drinfeld modules, we can consider unital algebras in
$\chb$. More precisely, we call $M\in \chb$ an \emph{$H$-Hopf
bimodule algebra} if $M$ is an associative algebra with
multiplication
$$\mu:M\ot M\rightarrow M\in {}_H^H\mathcal{M}_H^H,$$
where the tensor product is the one in ${}_H^H\mathcal{M}_H^H$.

\subsubsection{Equivalence of categories}

\begin{theorem}[\cite{Ro}]
There exists an equivalence of braided tensor categories
$\chb\sim\cyd$ sending an $H$-Hopf bimodule $M$ to the set of its
right coinvariants $M^R=\{m\in M|\ \delta_R(m)=m\ot 1_H\}$.
\end{theorem}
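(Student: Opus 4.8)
The plan is to build mutually quasi-inverse braided monoidal functors $F\colon\chb\to\cyd$ and $G\colon\cyd\to\chb$. On objects $F(M)=M^R$, with left $H$-comodule structure the restriction of $\delta_L$ and left $H$-module structure the adjoint action $h\triangleright m=\sum h_{(1)}\cdot m\cdot S(h_{(2)})$ (using the left and right $H$-actions of the bimodule $M$); on morphisms $F$ is restriction. Conversely $G(V)=V\ot H$ with left action $h\cdot(v\ot g)=\sum(h_{(1)}\cdot v)\ot h_{(2)}g$, right action $(v\ot g)\cdot h=v\ot gh$, left coaction $\delta_L(v\ot g)=\sum v_{(-1)}g_{(1)}\ot(v_{(0)}\ot g_{(2)})$, right coaction $\delta_R(v\ot g)=\sum(v\ot g_{(1)})\ot g_{(2)}$, and $G(f)=f\ot\id_H$. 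One first checks that the structures defining $F(M)$ land in $M^R$ --- for $\delta_L$ by feeding an element of $M^R$ into the bicomodule identity $(\id_H\ot\delta_R)\delta_L=(\delta_L\ot\id_H)\delta_R$, for the adjoint action by a short computation with the antipode using that $\delta_R$ is a bimodule map --- and that $(M^R,\triangleright,\delta_L)$ satisfies the Yetter--Drinfeld axiom. Dually, one checks $G(V)$ is an $H$-Hopf bimodule; here every axiom is routine except that $\delta_L$ is a left $H$-module map, which is \emph{exactly} the Yetter--Drinfeld condition on $V$. This equivalence of conditions is the conceptual heart of the statement.

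Next I would produce the natural isomorphisms. One has $G(V)^R=V\ot 1_H$, and the module and comodule $F$ induces on $V\ot 1_H$ restrict to the original ones on $V$ (another brief antipode computation), giving $FG\cong\id_{\cyd}$. For $GF$, the key remark is that $M$, with its right $H$-module structure and the coaction $\delta_R$, is an ordinary right--right $H$-Hopf module; the fundamental theorem of Hopf modules then yields an isomorphism $M^R\ot H\xrightarrow{\,\sim\,}M$, $m\ot h\mapsto m\cdot h$, with inverse $m\mapsto\sum m_{(0)}\cdot S(m_{(1)})\ot m_{(2)}$, and a direct check shows this map is simultaneously a morphism for all four Hopf-bimodule structures when the source carries the structures of $G(M^R)$; hence $GF\cong\id_{\chb}$.

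It then remains to upgrade $F$ to a braided monoidal equivalence. The unit objects correspond because $H^R=\gf 1_H$. On tensor products one shows that the canonical map $M^R\ot N^R\to(M\ot_H N)^R$ is a natural isomorphism compatible with the associativity and unit constraints (equivalently $G(V\ot W)\cong G(V)\ot_H G(W)$, coherently). Finally, evaluating Woronowicz's braiding on $\chb$ --- built from the bimodule and bicomodule data --- on $M\ot_H N$ and transporting it through the identifications above must return the Yetter--Drinfeld braiding $v\ot w\mapsto\sum v_{(-1)}\cdot w\ot v_{(0)}$. I expect this last step to be the main obstacle: writing Woronowicz's braiding out explicitly and matching it with the Yetter--Drinfeld braiding requires careful bookkeeping of the bicomodule structure on tensor products over $H$. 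By contrast, the coherence conditions for the monoidal constraints are formal, and the numerous module and comodule axiom checks are all routine applications of the Hopf-algebra axioms; complete details of the braided monoidal part are in \cite{W} and \cite{Ro}.
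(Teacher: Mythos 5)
Your two functors are exactly the ones the paper records after citing this theorem from \cite{Ro} --- the adjoint action $h\triangleright m=\sum h_{(1)}mS(h_{(2)})$ and projection $P_R(m)=\sum m_{(0)}S(m_{(1)})$ on one side, and the four structure maps on $V\otimes H$ on the other --- and the paper offers no further proof, deferring entirely to Rosso. Your outline of the quasi-inverse, monoidal and braiding verifications (including the use of the fundamental theorem of Hopf modules for $GF\cong\mathrm{id}$ and the deferral of the Woronowicz-braiding comparison to \cite{W} and \cite{Ro}) is correct and is essentially the same approach.
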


We give some details of this equivalence. By Theorem 4.1.1 in
\cite{S}, $M^R=P_R(M)$, where $P_R$ is the projection from $M$ onto $M^R$ given by $P_R(m)=\sum m_{(0)}S(m_{(1)})$ for $m\in M$. A direct verification shows that $M^R$ inherits a left $H$-comodule structure from $(M,\delta_L)$
and a left $H$-module structure given by the adjoint action:
$$h\cdot m=\sum h_{(1)}mS(h_{(2)}),\ \ \text{for}\ h\in H\
\text{and}\ m\in M^R.
$$
With these structures, $(M^R,\cdot, \delta_L)$ is an
$H$-Yetter-Drinfeld module.

Conversely, if $V$ is an $H$-Yetter-Drinfeld module, we denote
$M=V\ot H$ and define: for any $h,h'$ and $v\in V$,
$$
h' (v\ot h)=\sum h'_{(1)}\cdot v\ot h'_{(2)}h,\ \  (v\ot h)h' =v\ot hh',
$$
$$\delta_L(v\ot h)=\sum v_{(-1)}h_{(1)}\ot(v_{(0)}\ot h_{(2)}),\ \
\delta_R(v\ot h)=\sum (v\ot h_{(1)})\ot
h_{(2)}.
$$
Then $M$, equipped with these structures, is an $H$-Hopf bimodule.

The algebras in $\cyd$ and those in $\chb$ are corresponded under
the category equivalence. For an $H$-Hopf bimodule algebra $M$,
$M^R$ is naturally an algebra object in $\cyd$. In order to
construct $H$-Hopf bimodule algebras from algebras in $\cyd$, we
need the notion of smash product. Suppose that $V$ is an algebra
in $\cyd$. The smash product $V\#H$ of $V$ and $H$ is defined to
be $V\ot H$ as a vector space and
$$(v\# h)(v'\# h')=\sum v(h_{(1)}\cdot v')\# h_{(2)}h',\ \
v,v'\in V, h,h'\in H,$$where we use $\#$ instead of $\ts$ to
emphasize the smash product. One can prove that $V\#H$, together
with the above $H$-Hopf bimodule structure, is an $H$-Hopf
bimodule algebra.

\begin{example}
It is well-known that $H$ is a Yetter-Drinfeld module over itself
with the following structures: for any $x,h\in H$,
\[
x\cdot h=\sum x_{(1)}h S(x_{(2)}),\ \ \rho(h)=\sum h_{(1)}\ot h_{(2)}.
\]
In addition, it is a unital algebra in $\cyd$. Then $H\ot H$ is an
$H$-Hopf bimodule algebra with $H$-Hopf bimodule structure defined
before and algebra structure $$(h^1\ot h^2)(h^3\ot h^4)=\sum
h^1h^2_{(1)}h^3S(h^2_{(2)})\ot h^2_{(3)}h^4.$$
\end{example}

\subsection{Cofree Hopf algebras on $H$-Hopf bimodule algebras}
\subsubsection{Cotensor coalgebras}
Given two $H$-Hopf bimodules $M$ and $N$, we denote by $M\square
_H N$ their cotensor product, i.e., the kernel of the map
$\id_M\ot \delta_L-\delta_R\ot \id_N:M\ot N\rightarrow M\ot H\ot
N$. It is a subspace of $M\ot  N$. We endow $M\square _H N$ with
the following $H$-Hopf bimodule structure:
$$h(m\ot n)=\sum h_{(1)}m\ot h_{(2)}n,\ \
(m\ot n)h=\sum mh_{(1)}\ot nh_{(2)},
$$
$$
\rho_L(m\ot n)=\sum m_{(-1)}\ot (m_{(0)}\ot n),\ \
\rho_R(m\ot n)=\sum (m\ot n_{(0)})\ot n_{(1)}.
$$
The cotensor product is associative, which allows one to define
recursively $M^{\sq _H 0}=H$, $M^{\sq _H 1}=M$, and $M^{\sq _H
n+1}=M^{\sq _H n}\sq_H M$ for $n\geq 1$.  We denote
$T^c_H(M)=\bigoplus_{n\geq 0}M^{\sq _H n}$ and use $\sum m^1\sq
\cdots \sq m^n$ instead of $\sum m^1\ot \cdots \ot m^n$ to
indicate an element in $M^{\sq _H n}$. We define a linear map
$\Delta:T^c_H(M)\rightarrow T^c_H(M)\uot T^c_H(M)$ as follows: for
any $h\in H$,
$$\Delta(h)=\Delta_H(h)=\sum h_{(1)}\uot h_{(2)},
$$ for any $m\in M$, $$\Delta(m)=(\delta_L+\delta_R)(m)=\sum m_{(-1)}\uot m_{(0)}+\sum m_{(0)}\uot m_{(1)},$$
and for any $m^1\sq \cdots \sq m^n\in M^{\sq _H n}$ with $n\geq 2$,\begin{eqnarray*}
\lefteqn{\Delta(m^1\sq \cdots \sq m^n)}\\[3pt]
&=& \sum m^1_{(-1)}\uot m^1_{(0)}\sq m^2\sq \cdots \sq m^n+\sum_{i=1}^{n-1}m^1\sq \cdots \sq m^i\uot  m^{i+1}\sq \cdots \sq m^n\\[3pt]
&&+ \sum m^1\sq \cdots \sq m^{n-1}\sq m^n_{(0)}\uot m^n_{(1)}.
\end{eqnarray*}We also define $\varepsilon:T^c_H(M)\rightarrow\gf $ by requiring $\varepsilon|_H=\varepsilon_H$ and $\varepsilon|_{M^{\sq _H
n}}=0$ for $n\geq 1$. Together with these maps, $T^c_H(M)$ is a
graded coalgebra called the cotensor coalgebra over $H$ and $M$,
which is characterized by the following universal property (cf.
\cite{N}).

\begin{proposition}[\cite{N}]\label{Prop:UP1}
Let $(C,\Delta_C)$ be a coalgebra and $g:C\rightarrow H$ be a
coalgebra map. We view $C$ as an $H$-bicomodule with left coaction
$(g\ot \id_C)\Delta_C$ and right coaction $(\id_C\ot g)\Delta_C$
respectively. If $f:C\rightarrow M$ is an $H$-bicomodule map such
that $f(\mathrm{corad}(C))=0$, then there exists a unique
coalgebra map $F:C\rightarrow T^c_H(M)$ making the following
diagrams commute:  \begin{displaymath}
\xymatrix{T^c_H(M)\ar[d]_\pi&C\ar@{.>}[l]_-F\ar[dl]^g\\
H&},
\xymatrix{T^c_H(M)\ar[d]_p&C\ar@{.>}[l]_-F\ar[dl]^f\\
M&},
\end{displaymath}where $\pi$ and $p$ are the projections from $T^c_H(M)$ onto $H$ and $M$ respectively.

Explicitly, $F=g+\sum_{n\geq 1}f^{\ot n}\circ \Delta_C^{(n-1)}$.
\end{proposition}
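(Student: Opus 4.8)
The plan is to take the displayed formula $F=g+\sum_{n\geq 1}f^{\otimes n}\circ\Delta_C^{(n-1)}$ as the definition of the candidate map and then verify, in order, that it is well defined with values in $T^c_H(M)$, that it is a coalgebra morphism realizing the two triangles, and that it is the unique such morphism. Throughout I would write $F_0=g\colon C\to H=M^{\sq_H 0}$ and $F_n=f^{\otimes n}\circ\Delta_C^{(n-1)}\colon C\to M^{\otimes n}$ for $n\geq 1$, so that $F=\sum_{n\geq 0}F_n$, and use coassociativity of $\Delta_C$ freely, in particular in the forms $\Delta_C^{(p+q-1)}=(\Delta_C^{(p-1)}\otimes\Delta_C^{(q-1)})\circ\Delta_C$ and $(\Delta_C^{(n-2)}\otimes\id)\circ\Delta_C=\Delta_C^{(n-1)}$.

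First I would settle well-definedness. Since $T^c_H(M)=\bigoplus_{n\geq 0}M^{\sq_H n}$ is a \emph{direct} sum, one needs both that $F_n(c)=0$ for all but finitely many $n$ and that $F_n(c)$ lands in the cotensor power $M^{\sq_H n}$ rather than merely in $M^{\otimes n}$. For finiteness, choose $N$ with $c\in C_N$, the $N$-th term of the coradical filtration; iterating the standard inclusion $\Delta_C(C_N)\subseteq\sum_{i+j=N}C_i\otimes C_j$ gives $\Delta_C^{(n-1)}(C_N)\subseteq\sum_{i_1+\cdots+i_n=N}C_{i_1}\otimes\cdots\otimes C_{i_n}$, so for $n>N$ at least one index is $0$, that slot lies in $C_0=\mathrm{corad}(C)$, and $f$ annihilates it; hence $F_n(c)=0$ for $n>N$. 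For membership in $M^{\sq_H n}$, I would use that $f$ is an $H$-bicomodule map, i.e. $\delta_L\circ f=(g\otimes f)\circ\Delta_C$ and $\delta_R\circ f=(f\otimes g)\circ\Delta_C$: applying $\delta_L$ to the $(k+1)$-th tensor leg of $f^{\otimes n}\Delta_C^{(n-1)}(c)$ and applying $\delta_R$ to the $k$-th leg both produce $(f^{\otimes k}\otimes g\otimes f^{\otimes(n-k)})\Delta_C^{(n)}(c)$ once coassociativity is invoked, which is exactly the defining equation of the iterated cotensor product at the $k$-th wall, for each $1\leq k\leq n-1$.

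Next I would check that $F$ is a coalgebra map. The counit identity $\varepsilon\circ F=\varepsilon_C$ is immediate from $\varepsilon|_{M^{\sq_H n}}=0$ for $n\geq 1$ together with $\varepsilon_H\circ g=\varepsilon_C$. For comultiplicativity I would compare $\Delta(F(c))$ with $(F\uot F)(\Delta_C(c))$ bidegree by bidegree, using the explicit formula for $\Delta$ on $T^c_H(M)$: only $F_{i+j}$ contributes to bidegree $(i,j)$, and for $i,j\geq 1$ the relevant component is the middle cut $\sum(m^1\sq\cdots\sq m^i)\uot(m^{i+1}\sq\cdots\sq m^{i+j})$ of $\Delta(F_{i+j}(c))$, which coassociativity turns into $(F_i\uot F_j)(\Delta_C(c))$; the bidegree $(0,j)$ component ($j\geq 1$) is the left-coaction term $\sum m^1_{(-1)}\uot m^1_{(0)}\sq\cdots$ applied to $F_j(c)$, which $\delta_L\circ f=(g\otimes f)\circ\Delta_C$ and coassociativity identify with $(F_0\uot F_j)(\Delta_C(c))$; bidegree $(i,0)$ is symmetric via $\delta_R$; and bidegree $(0,0)$ gives $\Delta_H(g(c))=(g\otimes g)(\Delta_C(c))$ since $g$ is a coalgebra map. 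Summing the finitely many nonzero components yields $\Delta\circ F=(F\uot F)\circ\Delta_C$. The two triangles then commute automatically, since $\pi\circ F=F_0=g$ and $p\circ F=F_1=f$.

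Finally, for uniqueness, let $F'$ be any coalgebra map with $\pi F'=g$ and $pF'=f$, and set $F'_n=p_n\circ F'$ with $p_n\colon T^c_H(M)\to M^{\sq_H n}$ the projection; then $F'_0=g=F_0$ and $F'_1=f=F_1$. For $n\geq 2$, applying $p_{n-1}\uot p_1$ to $\Delta\circ F'=(F'\uot F')\circ\Delta_C$, and observing that $(p_{n-1}\uot p_1)\circ\Delta$ restricted to $M^{\sq_H n}$ is the canonical injection $M^{\sq_H n}\hookrightarrow M^{\sq_H(n-1)}\otimes M$ (all other summands of $\Delta$ having the wrong bidegree), one obtains $F'_n=(F'_{n-1}\otimes f)\circ\Delta_C$, whence $F'_n=F_n$ by induction. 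Among all these steps I expect the real obstacle to be the second half of well-definedness — verifying that $F_n(c)$ actually lies in the cotensor power $M^{\sq_H n}$ — because that is where the $H$-bicomodule compatibility of $f$ is genuinely used and where the Sweedler bookkeeping at the $n-1$ walls is heaviest; the remainder is a coradical-filtration estimate together with systematic, if lengthy, applications of coassociativity.
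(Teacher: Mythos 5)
Your proof is correct. The paper does not actually prove this proposition --- it is quoted from Nichols \cite{N} --- but your argument (taking the displayed formula as the definition of $F$, using the coradical filtration for local finiteness of the sum, the $H$-bicomodule property of $f$ to land in the cotensor powers $M^{\sq_H n}$, and a bidegree-by-bidegree comparison for comultiplicativity and for uniqueness) is exactly the standard proof of this universal property and is consistent with the explicit formula the paper records.
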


\subsubsection{Algebra structure on $T^c_H(M)$}
We will construct a bialgebra structure on $T^c_H(M)$ when $M$
admits moreover an algebra structure, generalizing constructions
in \cite{NR} and \cite{JRZ}. We mention that this bialgebra
appears recently as a special case in the framework of multi-brace
cotensor Hopf algebras and is used to realize the whole quantum
group (\cite{FR}). We would like to give a direct approach here.

Define
$$g=\mu_H (\pi \ot \pi):T^c_H(M)\uot T^c_H(M)\rightarrow H,$$ and
$$f=\cdot_L (\pi\ot p)+\cdot_R (p\ot \pi)+\mu (p\ot p):T^c_H(M)\uot
T^c_H(M)\rightarrow M,$$where $\cdot_L$ and $\cdot_R$ are the left
and right module structure maps of $M$ respectively.

Endowed with the coproduct of the tensor product of two
coalgebras, $T^c_H(M)\uot T^c_H(M)$ is a graded coalgebra. Since
both $\pi$ and $\mu_H$ are coalgebra maps, $g$ is a coalgebra map
as well. For an analogue reason, $f$ is an $H$-bicomodule map.

Since the coalgebra $(T^c_H(M), \Delta)$ is graded, we have that
$\mathrm{corad}(T^c_H(M))\subset \mathrm{corad}(H)\subset H$. On
the other hand, it is easy to show that for any two coalgebras $C$
and $D$ we have $\mathrm{corad}(C\otimes
D)\subset\mathrm{corad}(C)\otimes \mathrm{corad}(D)$, where
$C\otimes D$ is endowed with the tensor coproduct (cf. \cite{S}).
So $\mathrm{corad}(T^c_H(M)\uot T^c_H(M))\subset H\uot H$,  and
consequently, $f\big(\mathrm{corad}(T^c_H(M)\uot
T^c_H(M))\big)=0$.

By the universal property, there exists a unique coalgebra map
$F:T^c_H(M)\uot T^c_H(M)\rightarrow T^c_H(M)$ such that $\pi F=g$
and $p   F=f$. We want to show that $F$ is indeed an associative
product. In order to prove the associativity of $F$, we consider
$$\pi F(F\ot \id_{T^c_H(M)}),\ \ \pi F(\id_{T^c_H(M)}\ot
F):\ T^c_H(M)\uot T^c_H(M)\uot T^c_H(M)\rightarrow H,$$
and
$$pF(F\ot \id_{T^c_H(M)}),\ \ p F(\id_{T^c_H(M)}\ot F):\ T^c_H(M)\uot
T^c_H(M)\uot T^c_H(M)\rightarrow M.$$
It is clear that $\pi
F(F\ot \id_{T^c_H(M)})$ and $\pi F(\id_{T^c_H(M)}\ot F)$ are
coalgebra maps, while $p F(F\ot \id_{T^c_H(M)})$ and $p
F(\id_{T^c_H(M)}\ot F)$ are $H$-bimodule maps. If we could show that
$$\pi F(F\ot \id_{T^c_H(M)})=\pi F(\id_{T^c_H(M)}\ot F),\ \ \text{and}\ \ p F(F\ot \id_{T^c_H(M)})=p F(\id_{T^c_H(M)}\ot F),$$
then the associativity of $F$ is a consequence of the universal property of $T_H^c(M)$.

The first identity arises from the associativity of $\mu_H$. Let us prove the second one:
\begin{eqnarray*}
\lefteqn{p F (F\ot \id_{T^c_H(M)})}\\[3pt]
&=&\cdot_L(\mu_H(\pi\ot \pi)\ot p)+\cdot_R(f\ot \pi)+\mu(f\ot p)\\[3pt]
&=&\cdot_L(\pi\ot f)+\cdot_R(p\ot \mu_H(\pi\ot \pi))+\mu(p\ot f)\\[3pt]
&=&p  F (\id_{T^c_H(M)}\ot F).
\end{eqnarray*}

We denote $x\ast y=F(x\uot y)$ for any $x,y\in T^c_H(M)$. Then the
following theorem is proved in the above argument.

\begin{theorem}Let $M$ be an $H$-Hopf bimodule algebra. Then $(T^c_H(M),\ast,\Delta,\varepsilon)$ is a bialgebra.
\end{theorem}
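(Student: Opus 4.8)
The plan is to derive everything from the universal property of $T^c_H(M)$ (Proposition~\ref{Prop:UP1}) together with the two identities on the $\pi$- and $p$-components already displayed above; the bialgebra claim then splits into (i) associativity and unitality of $\ast$, and (ii) the fact that $\Delta$ and $\varepsilon$ are algebra morphisms, the coalgebra structure on $T^c_H(M)$ being already in place.

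For associativity I would argue that $F(F\ot\id_{T^c_H(M)})$ and $F(\id_{T^c_H(M)}\ot F)$ are both coalgebra maps $T^c_H(M)\uot T^c_H(M)\uot T^c_H(M)\to T^c_H(M)$, so by the uniqueness in Proposition~\ref{Prop:UP1} it suffices to check that their $\pi$-components agree, that their $p$-components agree, and that the common $p$-component kills the coradical. The first is the associativity of $\mu_H$; the second is exactly the three-line computation above; and for the third one uses $p|_H=0$: since $\mathrm{corad}(T^c_H(M)\uot T^c_H(M)\uot T^c_H(M))\subset H\uot H\uot H$ and $F$ restricted to $H\uot H$ takes values in $H$ (because $pF|_{H\uot H}=\cdot_L(\pi\ot p)+\cdot_R(p\ot\pi)+\mu(p\ot p)$ vanishes there), both triple composites map $H\uot H\uot H$ into $H$, so $p$ annihilates them on the coradical. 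Hence $\ast$ is associative.

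For the unit I would take $u\colon\gf\to T^c_H(M)$, $1\mapsto 1_H\in M^{\sq_H 0}=H$. As $1_H$ is grouplike, $x\mapsto 1_H\ast x=F(1_H\uot x)$ is a coalgebra endomorphism of $T^c_H(M)$; its $\pi$-component is $x\mapsto\mu_H(1_H\ot\pi(x))=\pi(x)$ and, using $p|_H=0$ again, its $p$-component is $x\mapsto\cdot_L(1_H\ot p(x))+\cdot_R(p(1_H)\ot\pi(x))+\mu(p(1_H)\ot p(x))=p(x)$, so by uniqueness it equals $\id_{T^c_H(M)}$; symmetrically $x\ast 1_H=x$.

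Finally, for the bialgebra compatibility I would note that $\varepsilon=\varepsilon_H\circ\pi$ (both sides restrict to $\varepsilon_H$ on $H$ and vanish on $M^{\sq_H n}$ for $n\ge 1$), whence $\varepsilon(x\ast y)=\varepsilon_H\big(\mu_H(\pi(x)\ot\pi(y))\big)=\varepsilon(x)\varepsilon(y)$ and $\varepsilon(1_H)=1$, so $\varepsilon$ is an algebra map; and $F$ is by construction a morphism of coalgebras out of the tensor-product coalgebra $T^c_H(M)\uot T^c_H(M)$, with $u$ grouplike, which is precisely the assertion that $\Delta$ and $\varepsilon$ are algebra morphisms. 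Combined with the already-established coalgebra structure this gives the bialgebra. The only genuinely computational input is the identity $pF(F\ot\id_{T^c_H(M)})=pF(\id_{T^c_H(M)}\ot F)$ carried out above; the rest is routine once one notices $p|_H=0$ and $\varepsilon=\varepsilon_H\pi$, so I expect no real obstacle beyond careful bookkeeping with the universal property.
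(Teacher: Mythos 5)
Your proposal is correct and follows essentially the same route as the paper: construct $F$ from the pair $(g,f)$ via the universal property of the cotensor coalgebra and prove associativity by matching the $\pi$- and $p$-components of the two triple composites, the only computational input being the identity $pF(F\ot\id)=pF(\id\ot F)$. You additionally spell out the unit axiom, the coradical condition for the triple tensor product, and the compatibility of $\Delta$ and $\varepsilon$ with $\ast$, all of which the paper leaves implicit; the only spot worth tightening is that $F(H\uot H)\subset H$ follows from the explicit formula $F=g+\sum_{n\ge1}f^{\ot n}\Delta^{(n-1)}$ together with $H\uot H$ being a subcoalgebra killed by $f$, not merely from the vanishing of the single component $pF|_{H\uot H}$.
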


\begin{remark}We remind the reader that the $H$-Hopf bimodule algebra $M$ is not necessarily unital in the above construction. \end{remark}

\subsection{The structure of $(T^c_H(M),\ast,\Delta,\varepsilon)$}

The bialgebra $(T^c_H(M),\ast,\Delta,\varepsilon)$ is constructed
by using the universal property in the last subsection. But the
algebra structure is not clear from the abstract definition. We
will show in this subsection that this product admits a
combinatorial interpretation by using the quantum quasi-shuffle
product (\cite{JRZ}).

\subsubsection{Quantum quasi-shuffle algebras}

For stating the construction of quantum quasi-shuffle algebras, we
introduce some necessary notions first.

\begin{definition}A quadruple $(A,m_A,\s)$ is called a
 \emph{braided algebra} if $(A,m_A)$ is an associative algebra with a braiding $\s$ on $A$ satisfying the following conditions:
$$(\id_A\ot m_A)\s_1\s_2=\s( m_A\ot \id_A),\ \ ( m_A\ot \id_A)\s_2\s_1=\s(\id_A\ot m_A).$$
A braided algebra $(A,m_A,\s)$ is called unital if there exists $1_A\in A$ such that $(A,m_A,1_A)$ is a unital associative algebra satisfying
$$\s(a\ot 1_A)=1_A\ot a,\ \ \s(1_A\ot a)=a\ot
1_A.$$

A quadruple $(C,\Delta_C,\varepsilon_C,\s)$ is called a
\emph{braided coalgebra} if $(C,\Delta_C,\varepsilon_C)$ is a
coalgebra with braiding $\s$ on $C$ satisfying the following
conditions:
$$
\s_1\s_2 (\Delta_C\ot  \id_C)= (\id_C\ot  \Delta_C)\s,\ \
\s_2\s _1(\id_C\ot\Delta_C)=(\Delta_C\ot \id_C)\s,$$
$$
(\id_C\ot
\varepsilon_C )\s=\varepsilon_C\ot \id_C,\ \
(\varepsilon_C \ot \id_C )\s =\id_C\ot  \varepsilon_C .
$$
A sextuple $(B,m_B,1_B,\Delta_B,\varepsilon_B,\s)$ is called a
\emph{braided bialgebra} if $(B,m_B,1_B,\s)$ is a braided algebra
and $(B,\Delta_B,\varepsilon_B,\s)$ is a braided coalgebra such
that
\[\left\{
\begin{split}\Delta_B m_B&=(m_B\ot m_B) (\id_B\ot
\s\ot\id_B)(\Delta_B\ot \Delta_B),\\[3pt]
\Delta_B(1_B)&=1_B\ot 1_B.\end{split} \right.
\]
We sometimes denote it by $(B,\s)$ for simplifying the
notation.\end{definition}

Since $\cyd$ is a braided tensor category, an algebra (resp.
coalgebra) object in $\cyd$ is naturally a braided algebra (resp.
coalgebra) with respect to the commutativity constraint in $\cyd$.

Let $\mathcal{BB}$ be the category of braided bialgebras
consisting of the following data:\\
(i) objects in $\mathcal{BB}$ are braided bialgebras;\\
(ii) morphisms between two objects $(B,\s)$ and $(B',\tau)$ in
$\mathcal{BB}$ are bialgebra morphisms $f:B\ra B'$ satisfying
$\tau  (f\ts f)=(f\ts f) \s$.

For an object $(B,\s)\in\mathcal{BB}$, $B\ts B$ is a braided
bialgebra with the product $(m_B\ot m_B) (\id_B\ot\s\ts\id_B)$ and
coproduct $(\id_B\ot\s\ot\id_B) (\Delta_B\ot \Delta_B)$.

We recall the definition of quantum quasi-shuffle algebras in a
general setting. Suppose that $(A,m,\s )$ is a braided algebra.
The quantum quasi-shuffle product $\qqs_\s$ on $T(V)$ is defined
as follows (\cite{JRZ}): for any $\lambda\in \gf$ and $x\in T(A)$,
the quantum quasi-shuffle product of $\lambda$ and $x$ is just the
scalar multiplication $\lambda x$. For $i,j\geq 2$ and any
$a_1,\ldots, a_i,b_1,\ldots, b_j\in A$, the product $\qqs_\s$ is
defined recursively by:\par $a_1\qqs_\s b_1= a_1\ot b_1+\s(a_1\ot
b_1)+m(a_1\ot b_1),$
\begin{eqnarray*}\lefteqn{ a_1\qqs_\s  (b_1\ot \cdots\ot
b_j)}\\[3pt]
&=&\left({\id}_A^{\ts j+1}+({\id}_A\ts \qqs_{\s(1,j-1)})(\beta_{1,1}\ts {\id}_A^{\ts j-1})+m\ts{\id}_A^{\ts j-1}\right)(a_1\ot b_1\ot \cdots\ot  b_j),
\end{eqnarray*}
\begin{eqnarray*}
\lefteqn{(a_1\ot \cdots\ot  a_i)\qqs_\s  b_1}\\[3pt]
&=&\left({\id}_A\ts\qqs_{\s(i-1,1)}+\beta_{i,1}+(m\ts{\id}_A^{\ts i-1})({\id}_A\ts \beta_{i-1,1})\right)(a_1\ts\cdots\ts a_i\ts b_1),
\end{eqnarray*}
\begin{eqnarray*}
\lefteqn{(a_1\ot \cdots\ot  a_i)\qqs_\s  (b_1\ot \cdots\ot  b_j)}\\[3pt]
&=&a_1\ot  \big((a_2\ot \cdots\ot  a_i)\qqs_\s  (b_1\ot \cdots\ot  b_{j})\big)\\[3pt]
&&+(\id_A\ot  \qqs_{\s  (i,j-1)})(\beta_{i,1}\ot \id_A^{\ot  j-1})(a_1\ot \cdots\ot  a_i\ot  b_1\ot \cdots\ot  b_j)\\[3pt]
&&+(m\ot \qqs_{\s  (i-1,j-1)} )(\id_A\ot  \beta_{i-1,1}\ot
\id_A^{\ot  j-1})(a_1\ot \cdots\ot  a_i\ot b_1\ot \cdots\ot  b_j),
\end{eqnarray*}
where $\qqs_{\s  (i,j)}$ denotes the restriction of $\qqs_\s $ on
$A^{\ot  i}\uot A^{\ot  j}$.

\begin{remark}
A new combinatorial description of the quantum quasi-shuffle product in the spirit of operads is discovered by the first author in \cite{F} by lifting shuffles to the generalized virtual braid monoid.
\end{remark}

It is shown in \cite{JRZ} that $(T(V),\qqs_\s)$ is a unital
algebra, which is called the quantum quasi-shuffle algebra
associated to the braided algebra $A$ and denoted by
$T_{\s,m}(A)$. Moreover, equipped with the deconcatenation
coproduct and the braiding $\beta$, $T_{\s,m}(A)$ is a braided
bialgebra. It has a universal property (\cite{JRZ}) which will
play an essential role in the later use. To state this
property, we need the following notion.

\begin{definition}Suppose $(C,\Delta_C, \varepsilon_C)$ is a coalgebra with a
preferred group-like element $1_C$. We denote
$\overline{\Delta_C}(x)=\Delta_C(x)-x\ot  1_C-1_C\ot  x$ for any
$x\in C$ and recursively
\[\begin{split}
F_0C&=\mathbb{K}1_C,\\[3pt]
F_rC&=\{x\in C|\overline{\Delta_C}(x)\in F_{r-1}C\ot F_{r-1}C \},\
\ \text{for }r\geq 1.
\end{split}\]
The coalgebra $C$ is said to be \emph{connected} if
$C=\bigcup_{r\geq 0}F_r C$.
\end{definition}

We denote by $\mathcal{CB}$ the full subcategory of $\mathcal{BB}$
whose objects are braided bialgebras $(B,\s)$ such that both $B$
and $B\ts B$ are connected.

Given a braided algebra $(A,m_A,\s)$, the quantum quasi-shuffle
algebra $T_{\s,m_A}(A)$ is uniquely determined by the following
universal property.
\begin{proposition}[\cite{JRZ}]

Let $(B,\tau)\in\mathcal{CB}$ be a braided bialgebra with
preferred group-like element $1_B$. Let $f:B\rightarrow A$ be a
linear map such that $m_A (f\ot f)=f  m_B$ on
$\ker\varepsilon_B\ot \ker\varepsilon_B$, $f(1_B)=0$ and $(f\ot f)
\tau=\s (f\ot f)$. Then there exists a unique braided bialgebra
morphism $\overline{f}:B\rightarrow T_{\s,m}(A)$ extending $f$.
Explicitly, $\overline{f}=\varepsilon_B+\sum_{n\geq 1}f^{\ot n}
\overline{\Delta_B}^{(n-1)}$.

\end{proposition}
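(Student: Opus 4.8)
The plan is to deduce everything from the cofreeness of the tensor coalgebra, i.e.\ from Proposition~\ref{Prop:UP1} applied with the trivial Hopf algebra $H=\gf$. As a coalgebra, $T_{\s,m}(A)$ is precisely the cotensor coalgebra $T^c_\gf(A)$: over $\gf$ the cotensor product is the tensor product, the comodule structures are trivial, and the coproduct of Proposition~\ref{Prop:UP1} becomes deconcatenation, with coradical $\gf1$. Since $(B,\tau)\in\mathcal{CB}$, the coalgebra $B$ is connected, so $\mathrm{corad}(B)=\gf1_B$ and the hypothesis $f(1_B)=0$ says exactly $f(\mathrm{corad}(B))=0$; connectedness also makes the sum $\varepsilon_B+\sum_{n\ge1}f^{\ot n}\overline{\Delta_B}^{(n-1)}$ locally finite, and it coincides with $\varepsilon_B+\sum_{n\ge1}f^{\ot n}\Delta_B^{(n-1)}$ because $f(1_B)=0$ annihilates all non-reduced terms. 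Hence Proposition~\ref{Prop:UP1}, with $g=\varepsilon_B:B\to\gf$ and with $f:B\to A$ viewed as a ($\gf$-)bicomodule map, yields a unique coalgebra map $\overline{f}:B\to T_{\s,m}(A)$ with $p\,\overline{f}=f$ (where $p$ is the canonical projection onto $A$), given by the stated formula, and one gets $\varepsilon_{T_{\s,m}(A)}\overline{f}=\varepsilon_B$ and $\overline{f}(1_B)=1$. This already gives uniqueness, since a braided bialgebra morphism extending $f$ is in particular a coalgebra map with these properties, hence equals $\overline{f}$.

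It remains to verify that $\overline{f}$ is a morphism in $\mathcal{BB}$, i.e.\ that it is multiplicative and that $\beta(\overline{f}\ot\overline{f})=(\overline{f}\ot\overline{f})\tau$. I would treat the braiding identity first, as it is the heart of the matter. Write $\overline{f}=\sum_{i\ge0}\overline{f}_i$ with $\overline{f}_0=\varepsilon_B$ and $\overline{f}_i=f^{\ot i}\overline{\Delta_B}^{(i-1)}:B\to A^{\ot i}$, so $\overline{f}\ot\overline{f}=\sum_{i,j}\overline{f}_i\ot\overline{f}_j$, the $(i,j)$-component landing in $A^{\ot i}\ot A^{\ot j}$, where $\beta$ acts as $\beta_{ij}=T^\s_{\chi_{ij}}$. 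From $(f\ot f)\tau=\s(f\ot f)$ one gets $\s_k\,f^{\ot m}=f^{\ot m}\,\tau_k$ for all $k,m$, and since a braid-group lift is the same reduced word in the $\s_k$'s, resp.\ in the analogous operators $\tau_k$ built from $\tau$ on $B$ (and is independent of that word), this propagates to $T^\s_{\chi_{ij}}\,f^{\ot(i+j)}=f^{\ot(i+j)}\,T^\tau_{\chi_{ij}}$. Pulling the $f$'s out in this fashion, the identity $\beta(\overline{f}\ot\overline{f})=(\overline{f}\ot\overline{f})\tau$ reduces to the purely coalgebraic equality (of maps $B\ot B\to B^{\ot n}$), for every $n$,
\[
\sum_{i+j=n}T^\tau_{\chi_{ij}}\bigl(\overline{\Delta_B}^{(i-1)}\ot\overline{\Delta_B}^{(j-1)}\bigr)=\sum_{k+l=n}\bigl(\overline{\Delta_B}^{(k-1)}\ot\overline{\Delta_B}^{(l-1)}\bigr)\tau ,
\]
after which one applies $f^{\ot n}$ and sums over $n$. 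This equality is the iterated form of the braided-coalgebra axioms $\tau_1\tau_2(\Delta_B\ot\id)=(\id\ot\Delta_B)\tau$ and $\tau_2\tau_1(\id\ot\Delta_B)=(\Delta_B\ot\id)\tau$; it is proved by induction on $n$, the discrepancy between the reduced coproduct $\overline{\Delta_B}$ and the full coproduct being absorbed by $\tau(1_B\ot x)=x\ot1_B$, $\tau(x\ot1_B)=1_B\ot x$ and the corresponding identities for $\beta$. I expect this induction --- the bookkeeping of braid lifts against iterated coproducts --- to be the main obstacle.

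Granting the braiding identity, multiplicativity follows by a second appeal to cofreeness. Since $\overline{f}$ is a coalgebra map and $\beta(\overline{f}\ot\overline{f})=(\overline{f}\ot\overline{f})\tau$, the map $\overline{f}\ot\overline{f}$ is a coalgebra morphism $B\uot B\to T_{\s,m}(A)\uot T_{\s,m}(A)$ for the braided tensor-product coproducts, while $m_B:B\uot B\to B$ and $\qqs_\s:T_{\s,m}(A)\uot T_{\s,m}(A)\to T_{\s,m}(A)$ are coalgebra maps because $B$ and $T_{\s,m}(A)$ are braided bialgebras. Hence $\overline{f}\,m_B$ and $\qqs_\s(\overline{f}\ot\overline{f})$ are two coalgebra maps from the connected coalgebra $B\uot B$ (connected because $(B,\tau)\in\mathcal{CB}$) to $T^c_\gf(A)=T_{\s,m}(A)$, so by the uniqueness part of Proposition~\ref{Prop:UP1} it suffices to see that their compositions with $p$ coincide. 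One has $p\,\overline{f}\,m_B=f\,m_B$, while the defining recursion of $\qqs_\s$ gives, for $x,y\in B$,
\[
p\,\qqs_\s(\overline{f}\ot\overline{f})(x\ot y)=\varepsilon_B(y)f(x)+\varepsilon_B(x)f(y)+m_A\bigl(f(x)\ot f(y)\bigr),
\]
and these are equal because $m_A(f\ot f)=f\,m_B$ on $\ker\varepsilon_B\ot\ker\varepsilon_B$ and $f(1_B)=0$ (decompose $x$ and $y$ into their $\varepsilon$- and $\ker\varepsilon_B$-components). Therefore $\overline{f}(xy)=\overline{f}(x)\qqs_\s\overline{f}(y)$; combined with $\overline{f}(1_B)=1$ and $\varepsilon_{T_{\s,m}(A)}\overline{f}=\varepsilon_B$, this shows $\overline{f}\in\mathcal{BB}$ and finishes the proof.
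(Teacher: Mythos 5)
First, a point of calibration: the paper itself does not prove this proposition --- it is imported from \cite{JRZ} as a known result --- so there is no in-paper argument to compare yours against. Judged on its own, your architecture is sound, and it is the same modus operandi the paper uses for its own statements (the associativity of $\ast$ in Section 2.2.2 and the universal property established in Section 3.2): identify $T_{\s,m}(A)$ as the cotensor coalgebra $T^c_\gf(A)$ with the deconcatenation coproduct, obtain existence, uniqueness and the explicit formula for $\overline{f}$ from Proposition \ref{Prop:UP1} with $H=\gf$ (connectedness of $B$ forcing $\mathrm{corad}(B)=\gf 1_B$), and then prove multiplicativity by comparing the two coalgebra maps $\overline{f}\,m_B$ and $\qqs_\s(\overline{f}\ot\overline{f})$ out of the connected coalgebra $B\uot B$ through their degree-$0$ and degree-$1$ components; your degree-$1$ computation is correct, and you have also put the steps in the right logical order, since the braiding identity is exactly what makes $\overline{f}\ot\overline{f}$ a coalgebra map for the braided tensor coproducts.

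The one step carrying the real content --- $\beta(\overline{f}\ot\overline{f})=(\overline{f}\ot\overline{f})\tau$ --- is, as you yourself admit, only gestured at, and I would not accept the proof until it is written out. Two concrete remarks. First, the indices in your displayed ``purely coalgebraic equality'' are off: the comparison must be made degree by degree in the target, and since $T^\tau_{\chi_{ij}}$ sends the $(i,j)$-block to the $(j,i)$-block, the identity you actually need is
\[
T^\tau_{\chi_{ij}}\bigl(\overline{\Delta_B}^{(i-1)}\ot\overline{\Delta_B}^{(j-1)}\bigr)=\bigl(\overline{\Delta_B}^{(j-1)}\ot\overline{\Delta_B}^{(i-1)}\bigr)\tau
\]
for each pair $(i,j)$ separately (possibly only after composing with $f^{\ot(i+j)}$), not the version summed over $i+j=n$ with matching index patterns on both sides. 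Second, the induction proving this is not a formality: one must track how the lifts $T^\tau_{\chi_{ij}}$ factor through the generators $\tau_k$, apply the braided-coalgebra axioms (which are stated for the full coproduct $\Delta_B$), and only then strip out the group-like terms using $\tau(1_B\ot x)=x\ot 1_B$, $\tau(x\ot 1_B)=1_B\ot x$ and $f(1_B)=0$. You have correctly located the difficulty, but locating it is not the same as resolving it; everything else in the proposal I would accept essentially as written.
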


\subsubsection{Radford biproduct construction}
In order to analyze the algebra structure of
$(T^c_H(M),\ast,\Delta)$, we need some results about bialgebras
with a projection onto a Hopf algebra due to Radford (\cite{Ra}).

Let $K$ be a Hopf algebra and $(A,m_A,1_A,\Delta_A,\varepsilon_A)$ be a bialgebra with two
bialgebra maps $i: K\rightarrow A$ and $\pi: A\rightarrow K$ such
that $\pi  i=\id_K$. Set $\Pi=\id_A\star(i  S  \pi)$, where
$\star$ is the convolution product on $\mathrm{End}(A)$, and
$B=\Pi(A)$. We give a summary on the main results in \cite{Ra}.
\begin{enumerate}
\item Equipped with left action $m_A (i\ot \id_A)$, right action
$m_A (\id_A\ot i)$, left coaction $(\pi\ot \id_A) \Delta_A$ and
right coaction $(\id_A\ot \pi) \Delta_A$, $A$ is a $K$-Hopf
bimodule. The set of right coinvariants $A^R$ is exactly $B$.
\item $B$ is a subalgebra of $A$. If we define $\Delta_B=(\Pi\ot
\Pi) \Delta_A$, then $(B,\Delta_B,\varepsilon_A|_B)$ is a
coalgebra.
\item Equipped with the standard $K$-Yetter-Drinfeld module
induced from the $K$-Hopf bimodule structure, $(B,m_A|_B,
1_A,\Delta_B, \varepsilon_A|_B)$ is a braided bialgebra.
\item The map $B\ot K\rightarrow A$ given by $b\ot h\mapsto bi(h)$
is a bialgebra isomorphism, where $B\ot K$ is endowed with the
smash product and smash coproduct.
\end{enumerate}

\subsubsection{The smash structure of $(T^c_H(M),\ast,\Delta)$}

Note that both of the inclusion map $i:H\rightarrow T^c_H(M)$ into
degree 0 and the projection map $\pi :T^c_H(M)\rightarrow H$ onto
degree 0 are bialgebra maps such that $\pi  i=\id_H$. By applying
the construction of Radford to $(T^c_H(M),\ast,1_H,\Delta,\varepsilon)$,
$(T^c_H(M)^R,\ast)$ is a subalgebra of
$(T^c_H(M),\ast)$, and $T^c_H(M)^R\# H$ is isomorphic, as a
bialgebra, to $T^c_H(M)$ via the isomorphism $\xi$ given by
$\xi(x\ot h)=x\ast i(h)$ for any $x\in T_H^c(M)^R$ and $h\in H$.
According to the preceding discussion, the investigation of
$(T^c_H(M),\ast)$ can be restricted to the subalgebra
$(T^c_H(M)^R,\ast)$.

We denote by $T(M^R)$ the tensor vector space on $M^R$.

\begin{proposition}The vector spaces
$T^c_H(M)^R$ and $T(M^R)$ are isomorphic as $H$-Yetter-Drinfeld
modules.
\end{proposition}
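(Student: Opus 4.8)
The plan is to deduce the statement from the braided monoidal equivalence $\chb\sim\cyd$, $P\mapsto P^R$, of \cite{Ro}, together with a ``cotensor cancellation'' identity and a reconciliation of the two $H$-Hopf bimodule structures carried by $T^c_H(M)$.

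First I would verify that the $H$-Hopf bimodule structure which Radford's construction, applied to $(T^c_H(M),\ast,1_H,\Delta,\varepsilon)$ with projection $\pi$, puts on $T^c_H(M)$ --- that is, the bi-actions $h\cdot x=i(h)\ast x$ and $x\cdot h=x\ast i(h)$ and the bi-coactions $(\pi\otimes\id)\Delta$ and $(\id\otimes\pi)\Delta$ --- is graded and restricts on each homogeneous component $M^{\square_H n}$ to the usual cotensor $H$-Hopf bimodule structure. For the coactions this is read off directly from the formula defining $\Delta$ on $T^c_H(M)$: since $\pi$ vanishes on $M^{\square_H k}$ for $k\ge 1$, the only summands surviving $(\pi\otimes\id)\Delta$ and $(\id\otimes\pi)\Delta$ on $M^{\square_H n}$ are $\sum m^1_{(-1)}\uot(m^1_{(0)}\square\cdots\square m^n)$ and $\sum(m^1\square\cdots\square m^n_{(0)})\uot m^n_{(1)}$, which are exactly $\rho_L$ and $\rho_R$. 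For the actions one uses the closed form $F=g+\sum_{n\ge 1}f^{\otimes n}\Delta_C^{(n-1)}$ of Proposition~\ref{Prop:UP1}: every tensor leg of $\Delta_C^{(n-1)}(i(h)\uot x)$ has the shape $i(h_{(j)})\uot x_{(j)}$, and because $f$ annihilates such a leg unless the $x_{(j)}$-component is homogeneous of degree $1$, while $\pi i=\id_H$, a degree count forces the summation index to equal the degree of $x$ and leaves precisely $\sum h_{(1)}m^1\square\cdots\square h_{(n)}m^n$ when $x=m^1\square\cdots\square m^n$; the right action is symmetric. Consequently the $H$-Yetter--Drinfeld structure attached to $T^c_H(M)^R$ above is $\bigoplus_{n\ge 0}\mathcal{F}(M^{\square_H n})$, where $\mathcal{F}\colon\chb\to\cyd$ is the functor $P\mapsto P^R$.

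Next I would prove the cotensor cancellation lemma: for $V,W\in\cyd$ with associated $H$-Hopf bimodules $V\otimes H$ and $W\otimes H$, the map $v\otimes w\otimes h\mapsto\sum(v\otimes w_{(-1)}h_{(1)})\otimes(w_{(0)}\otimes h_{(2)})$ is an isomorphism of $H$-Hopf bimodules $(V\otimes W)\otimes H\xrightarrow{\ \sim\ }(V\otimes H)\square_H(W\otimes H)$, with inverse $\sum(v\otimes h)\otimes(w\otimes h')\mapsto\sum v\otimes\varepsilon(h)w\otimes h'$ --- equivalently, this is the monoidality of $\mathcal{F}$ once one knows $H\square_H X\cong X$ for the tensor unit. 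Starting from the standard isomorphism $M\cong M^R\otimes H$ of $H$-Hopf bimodules recalled above, an induction on $n$ using the lemma yields $M^{\square_H n}\cong(M^R)^{\otimes n}\otimes H$ as $H$-Hopf bimodules, where $(M^R)^{\otimes n}$ carries the tensor product in $\cyd$; applying $\mathcal{F}$, i.e. passing to right coinvariants, gives $\mathcal{F}(M^{\square_H n})=(M^{\square_H n})^R\cong(M^R)^{\otimes n}$ in $\cyd$, since $(W\otimes H)^R=W\otimes 1_H$ with its induced Yetter--Drinfeld structure being that of $W$. Summing over $n\ge 0$ (the $n=0$ term being $H^R=\gf 1_H$) and invoking the previous paragraph,
\[
T^c_H(M)^R=\bigoplus_{n\ge 0}(M^{\square_H n})^R\cong\bigoplus_{n\ge 0}(M^R)^{\otimes n}=T(M^R)
\]
as $H$-Yetter--Drinfeld modules, which is the claim.

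The step I expect to be the main obstacle is the first one: the Yetter--Drinfeld structure on $T^c_H(M)^R$ is defined through Radford's recipe, in which the product $\ast$ enters only via its implicit characterization by the universal property, so some care is needed to unwind $F$ and, in particular, to confirm that $\ast$-multiplication by degree-$0$ elements is genuinely degree-preserving. The cotensor cancellation lemma, by contrast, reduces to writing down the two maps and checking they are mutually inverse morphisms in $\chb$, and the reduction to coinvariants is immediate; the only genuinely external input is the braided monoidal equivalence of \cite{Ro}, which we take for granted.
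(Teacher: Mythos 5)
Your argument is correct, and it reaches the same isomorphism as the paper by a different route. The paper simply writes down the two maps explicitly on each graded piece --- $\varphi=P_R^{\ot n}$ on $(M^{\sq_H n})^R$ and $\psi(v^1\ot\cdots\ot v^n)=\sum v^1v^2_{(-1)}\cdots v^n_{(-n+1)}\sq\cdots\sq v^n_{(0)}$ --- and declares the verification that they are mutually inverse morphisms of Yetter--Drinfeld modules to be routine. You instead derive the isomorphism structurally: from the monoidality of the equivalence $\chb\sim\cyd$, encoded in the cotensor cancellation isomorphism $(V\ot H)\sq_H(W\ot H)\cong(V\ot W)\ot H$, an induction gives $M^{\sq_H n}\cong (M^R)^{\ot n}\ot H$ in $\chb$, and passing to right coinvariants yields the claim; unwinding your induction reproduces exactly the paper's $\psi$ and $\varphi$. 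What your version buys is (i) a reduction of the ``routine verification'' to a single two-factor lemma, and (ii) the fact that the resulting Yetter--Drinfeld structure on $(M^R)^{\ot n}$ is the braided tensor product in $\cyd$ comes for free, which is what the subsequent identification with the quantum quasi-shuffle algebra actually needs. You also make explicit a point the paper leaves implicit, namely that the Hopf bimodule structure Radford's construction places on $(T^c_H(M),\ast)$ is graded and restricts on each $M^{\sq_H n}$ to the cotensor-product Hopf bimodule structure; your degree count through $F=g+\sum_{n\ge1}f^{\ot n}\Delta_C^{(n-1)}$ is the right way to see this, and it is a genuine (if small) addition rather than an obstacle. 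The price of your approach is reliance on the base isomorphism $M\cong M^R\ot H$ and the monoidal content of Rosso's equivalence as external input, which the paper's hands-on proof avoids invoking.
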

\begin{proof}
Essentially, this is proved in \cite{Ro}. For each $n\geq 1$, we
define
$$\varphi:(M^{\sq_H n})^R\rightarrow (M^R)^{\ot n}\ \ \text{and}\ \ \psi: (M^R)^{\ot n}\rightarrow (M^{\sq_H n})^R$$
as follows: for any $\sum m^1\sq\cdots\sq m^n\in M^{\sq_H n}$ and
$v^1\ot \cdots\ot v^n\in (M^{R})^{\ot n}$, $\varphi$ sends $\sum
m^1\sq\cdots\sq m^n$ to$$\sum m_{(0)}^1S(m_{(1)}^1)\ot \cdots\ot
m_{(0)}^{n-1}S(m_{(1)}^{n-1})\ot m^n_{(0)}S(m_{(1)}^n),$$ $\psi$
sends $v^1\ot \cdots\ot v^n$ to
$$\sum v^1 v_{(-1)}^2v_{(-2)}^3\cdots v_{(-n+1)}^n\sq\cdots\sq v_{(0)}^{n-1}v_{(-1)}^n\sq v_{(0)}^n.$$

Then it is routine to verify that both $\varphi$ and $\psi$ are
well-defined $H$-Yetter-Drinfeld module morphisms satisfying
$\varphi\circ\psi=\id_{(M^R)^{\ot n}}$ and
$\psi\circ\varphi=\id_{(M^{\sq_H n})^R}$.
\par
This proves the proposition since both sides are graded.
\end{proof}

We fix this isomorphism $\vp: T^c_H(M)^R\ra T(M^R)$ in the sequel.

Our next task is to show that the subalgebra $(T^c_H(M)^R,\ast)$
is isomorphic to the quantum quasi-shuffle algebra built on $M^R$.

\begin{theorem}
The map $\varphi: T^c_H(M)^R\rightarrow T_{\s,m}(M^R)$ is a morphism of braided bialgebras. As a consequence, $T^c_H(M)$ is
isomorphic, as a Hopf algebra, to $T_{\s,m}(M^R)\# H$.
\end{theorem}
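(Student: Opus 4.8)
The plan is to apply the universal property of the quantum quasi-shuffle algebra (the last Proposition quoted from \cite{JRZ}) to the braided bialgebra $B=T^c_H(M)^R$ with its induced Yetter-Drinfeld structure and the map $f=p|_{T^c_H(M)^R}:T^c_H(M)^R\to M^R$, where $p$ is the degree-$1$ projection composed with $P_R$ (equivalently, $f=\vp$ followed by projection onto $(M^R)^{\ot 1}$). Concretely, I would first check the hypotheses of that Proposition: (a) $B$ and $B\ts B$ are connected — this follows because $T^c_H(M)$ is a graded connected-in-the-appropriate-sense coalgebra over $H$ and the Radford projection $\Pi$ lands $B$ in a filtered piece whose associated coradical filtration is exhausted, so $B\in\mathcal{CB}$; (b) $f(1_B)=0$, which is immediate since $1_B=1_H$ sits in degree $0$; (c) the braiding compatibility $(f\ot f)\tau=\s(f\ot f)$, where $\tau$ is the Woronowicz/Yetter-Drinfeld braiding on $B$ and $\s$ the Yetter-Drinfeld braiding on $M^R$ — this holds because $\vp$ is a Yetter-Drinfeld module map (previous Proposition) and $f$ factors through $\vp$; (d) the key multiplicativity relation $m_{M^R}(f\ot f)=f\,m_B$ on $\ker\varepsilon_B\ot\ker\varepsilon_B$, where $m_B=\ast|_B$ and $m_{M^R}$ is the product of the algebra $M^R$ in $\cyd$.

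I expect step (d) to be the main obstacle. Unwinding it: for $x,y\in\ker\varepsilon_B$ one must show that the degree-$1$ component of $x\ast y$ equals $\mu(p(x)\ot p(y))$ when $x,y$ are homogeneous of degree $1$, and equals $0$ otherwise. This is exactly where the explicit formula $f=\cdot_L(\pi\ot p)+\cdot_R(p\ot\pi)+\mu(p\ot p)$ defining $F$ enters: since $pF=f$ and $p\ast$ agrees with $p F$ after applying $\Delta$ appropriately, the degree-$1$ part of $x\ast y$ is governed by $f$, and on $\ker\varepsilon_B\ot\ker\varepsilon_B$ both $\pi(x)$ and $\pi(y)$ vanish unless the relevant tensor factor has degree $0$ — but degree-$0$ parts are excluded, so only the $\mu(p\ot p)$ term can survive, and it survives precisely in bidegree $(1,1)$. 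I would carry this out by a short induction on total degree using the coassociativity of $\Delta$ together with $\Delta(x)-x\ot 1-1\ot x\in\bigoplus_{i,j\geq 1}(\cdots)$ for $x\in\ker\varepsilon$, reducing the computation of $p(x\ast y)$ to lower-degree instances plus the base case $a\ast b$ for $a,b\in M^R$, which gives $a\ot b+\s(a\ot b)+\mu(a\ot b)$ — matching the base case of $\qqs_\s$.

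Once the four hypotheses are verified, the Proposition produces a unique braided bialgebra morphism $\overline{f}:T^c_H(M)^R\to T_{\s,m}(M^R)$ extending $f$, with $\overline{f}=\varepsilon_B+\sum_{n\geq 1}f^{\ot n}\overline{\Delta_B}^{(n-1)}$. I would then identify $\overline{f}$ with $\vp$: both are graded, both restrict to the identity-up-to-identification in degree $1$, and the explicit formula for $\overline{f}$ together with the description of $\Delta$ on $T^c_H(M)$ shows $\overline{f}$ has components $(M^{\sq_H n})^R\to (M^R)^{\ot n}$ given by iterated $P_R$-projection, which is precisely $\vp$. Hence $\vp=\overline{f}$ is a morphism of braided bialgebras; since $\vp$ is already known to be bijective (previous Proposition), it is an isomorphism of braided bialgebras. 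Finally, by part (4) of the Radford summary, $T^c_H(M)\cong T^c_H(M)^R\# H$ as bialgebras (indeed Hopf algebras, $H$ being a Hopf algebra and $T^c_H(M)$ then having an antipode via the connectedness/grading), and transporting along $\vp$ gives $T^c_H(M)\cong T_{\s,m}(M^R)\# H$ as Hopf algebras, which is the assertion.
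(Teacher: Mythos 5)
Your proposal is correct and follows essentially the same route as the paper: verify that $T^c_H(M)^R$ and its square are connected, apply the universal property of the quantum quasi-shuffle algebra to the degree-one projection $f=P_R\circ p$, identify the resulting $\overline{f}=\varepsilon+\sum_{n\ge 1}f^{\ot n}\overline{\Delta}^{(n-1)}$ with $\varphi$ degree by degree, and conclude via Radford's biproduct decomposition. The paper dispatches your step (d) with a one-line "it is obvious," whereas you correctly flag it as the point requiring the explicit formula $f=\cdot_L(\pi\ot p)+\cdot_R(p\ot\pi)+\mu(p\ot p)$; this is a welcome expansion, not a deviation.
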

\begin{proof}
Together with the coproduct $\delta=(\Pi\ot \Pi) \Delta$,
$T^c_H(M)^R$ is a braided bialgebra, where $\Pi$ is the
convolution product of $\id_{T^c_H(M)}$ and $i  S  \pi$. Note that
$T^c_H(M)^R$ and $T^c_H(M)^R\uot T^c_H(M)^R$ are connected since
\[
F_0T^c_H(M)^R=\gf 1_H ,\ \
F_rT^c_H(M)^R=\bigoplus_{0\leq i\leq r}\Pi(M^{\sq_H i}),\ \ r\geq 1
\]
and
\[\left\{
\begin{split}
F_0(T^c_H(M)^R\uot T^c_H(M)^R)&=\gf 1_H \uot 1_H,\\[3pt]
F_r(T^c_H(M)^R\uot T^c_H(M)^R)&=\bigoplus_{0\leq i_1+i_2\leq
r}\Pi(M^{\sq_H i_1})\uot \Pi(M^{\sq_H i_2}),\ \ r\geq 1.
\end{split} \right.
\]

Define a linear map $f:T_H^c(M)^R\ra M^R$ by $0$ on degree other
than $1$ and $f=P_R$ on degree $1$. It is obvious that $f$
commutes with the braidings and is an algebra map on
$\ker\varepsilon \ot \ker\varepsilon
=T^c_H(M)^R\uot T^c_H(M)^R$ with $f(1_H)=0$.
 According to Proposition 2.10, it extends to $\overline{f}:T_H^c(M)^R\ra
T_{\s,m}(M^R)$. Then it remains to show that
$\overline{f}=\varphi$ on each degree. This is direct since $f$
concentrate on degree $1$, the only non-zero term in the
restriction of $\overline{f}$ on $(M^{\sq_H n})^R$ is $f^{\ts
n}\circ\Delta^{(n-1)}$, which is given by the action of $P_R^{\ts
n}$.
\end{proof}

\section{Other structures and properties}
\subsection{Rota-Baxter algebra structure on $(T^c_H(M),\ast)$} We
turn to establish a Rota-Baxter algebra structure on
$(T_H^c(M),\ast)$. A basic reference is \cite{G}.

\begin{definition}Given $\lambda\in\gf$, A pair $(R,P)$ is called a \emph{Rota-Baxter algebra of weight $\lambda$} if $R$ is a $\gf$-algebra and $P$ is a $\gf$-linear endomorphism of $R$ satisfying $$P(x)P(y)=P(xP(y))+P(P(x)y)+\lambda P(xy),\forall x,y\in R.$$The map $P$ is call a \emph{Rota-Baxter operator} of weight $\lambda$.\end{definition}

If $P$ is a Rota-Baxter operator of weight 1, then $\lambda P$ is
a Rota-Baxter operator of weight $\lambda$ (cf. \cite{G}). So we
can just focus on the case of weight 1.

Given a Rota-Baxter algebra, the following remarkable property
enables one to provide new Rota-Baxter algebra structure on the
underlying space with a different product from the original one
and the Rota-Baxter operator.

\begin{lemma}[\cite{G}]
Let $(R,P)$ be a Rota-Baxter algebra of weight $\lambda$. We define $R_\heartsuit$ be the vector space $R$ with the multiplication
$$x\heartsuit y=xP(y)+P(x)y+\lambda xy,\ \ \text{for}\ x,y\in R.$$
Then $(R_\heartsuit,P)$ is again a Rota-Baxter algebra of weight
$\lambda$.
\end{lemma}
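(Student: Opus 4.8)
The statement asks for two things: that $\heartsuit$ is an associative multiplication on the underlying space of $R$, and that $P$ is a Rota-Baxter operator of weight $\lambda$ for this new multiplication, i.e.\ that $P(x)\heartsuit P(y)=P(x\heartsuit P(y))+P(P(x)\heartsuit y)+\lambda P(x\heartsuit y)$ for all $x,y\in R$. The plan is to establish both by a direct expansion, the only inputs being the associativity of the original product of $R$ and the defining relation $P(a)P(b)=P(aP(b))+P(P(a)b)+\lambda P(ab)$ of $P$.

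First I would prove associativity. Expanding the definition $x\heartsuit y=xP(y)+P(x)y+\lambda xy$ twice, each of $(x\heartsuit y)\heartsuit z$ and $x\heartsuit(y\heartsuit z)$ becomes a sum of nine monomials in $x,y,z$ and $P$. The key observation is that in $(x\heartsuit y)\heartsuit z$ exactly three of these monomials are $P(xP(y))z$, $P(P(x)y)z$ and $\lambda P(xy)z$, whose sum collapses to $P(x)P(y)z$ by the Rota-Baxter relation applied to the pair $(x,y)$; symmetrically, in $x\heartsuit(y\heartsuit z)$ the three monomials $xP(yP(z))$, $xP(P(y)z)$, $\lambda xP(yz)$ collapse to $xP(y)P(z)$. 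After these two substitutions both sides reduce to the same sum of seven monomials, namely $xP(y)P(z)+P(x)yP(z)+P(x)P(y)z+\lambda\big(xyP(z)+xP(y)z+P(x)yz\big)+\lambda^2xyz$, which gives associativity.

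Next I would verify the Rota-Baxter identity for $(R_\heartsuit,P)$. On the left side, $P(x)\heartsuit P(y)=P(x)P(P(y))+P(P(x))P(y)+\lambda P(x)P(y)$, and I would apply the Rota-Baxter relation once to each of these three products, with argument pairs $(x,P(y))$, $(P(x),y)$ and $(x,y)$ respectively. On the right side, expanding $x\heartsuit P(y)$, $P(x)\heartsuit y$ and $x\heartsuit y$ inside $P$ and using the linearity of $P$ produces nine $P$-monomials; after collecting like terms both sides turn out to equal the same six-term expression $P(xP(P(y)))+2P(P(x)P(y))+2\lambda P(xP(y))+P(P(P(x))y)+2\lambda P(P(x)y)+\lambda^2P(xy)$, and the identity follows. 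The only real difficulty here is bookkeeping — keeping all the monomials straight and applying the Rota-Baxter relation in the correct (left-to-right) direction; there is no conceptual obstacle, and since the statement is classical one may alternatively simply cite \cite{G}.
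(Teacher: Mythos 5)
Your computation is correct and complete. Both verifications check out: in the associativity calculation the two collapses via the Rota-Baxter relation (applied to $P(x\heartsuit y)$ on one side and to $P(y\heartsuit z)$ on the other) do reduce both sides to the common seven-term expression $xP(y)P(z)+P(x)yP(z)+P(x)P(y)z+\lambda\bigl(xyP(z)+xP(y)z+P(x)yz\bigr)+\lambda^{2}xyz$, and in the Rota-Baxter identity for $\heartsuit$ both sides expand to $P(xP(P(y)))+2P(P(x)P(y))+2\lambda P(xP(y))+P(P(P(x))y)+2\lambda P(P(x)y)+\lambda^{2}P(xy)$. The paper itself gives no proof of this lemma --- it is quoted from the reference on Rota-Baxter algebras --- so there is nothing to compare against; your direct expansion is exactly the standard argument found there.
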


Now we recall a construction of Rota-Baxter algebras from quantum
quasi-shuffle algebras (cf. \cite{J}). Let $(A,m,\s)$ be a unital
braided algebra. We consider the vector space $A\uot T(A)$. As
vector spaces, $A\uot T(A)\cong T^+(A)=A\oplus A^{\ot 2}\oplus
\ldots$. We define an associative product
$$\lozenge_\s=(m\ot \qqs_\s)(\id_A\ot \beta \ot\id_{T(A)})$$
 on $A\uot T(A)$. We denote by $\mathcal{A}$ the algebra $(A\uot T(A),\lozenge_\s )$. We also define an endomorphism $Q$ on $A\uot T(A)$ by $Q(a\uot x)=1_A\uot a\ot x$. Then $(\mathcal{A}, Q)$ a Rota-Baxter
algebra of weight $1$.
\par
We let $T_{\s,m}^{+}(A)$ denote the sub-algebra of $T_{\s,m}(A)$ containing elements of positive degrees.

\begin{proposition}
We have that $\mathcal{A}_\heartsuit$ is isomorphic to
$T_{\s,m}^{+}(A)$ as an algebra. Therefore $(T_{\s,m}^{+}(A),P^+)$
is a Rota-Baxter algebra of weight $1$ with $P^+(x)=1_A\ts x$ for
$x\in T_{\s,m}^{+}(A)$.
\end{proposition}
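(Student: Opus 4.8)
The plan is to exhibit an explicit algebra isomorphism $\Theta\colon\mathcal{A}_\heartsuit\ra T_{\s,m}^{+}(A)$ and then transport the Rota--Baxter operator along it. For $\Theta$ I take the canonical linear identification $A\uot T(A)\cong T^{+}(A)$ noted above, which sends $a\uot(b_1\ts\cdots\ts b_n)$ to $a\ts b_1\ts\cdots\ts b_n$ and $a\uot\lambda$ to $\lambda a$ for $\lambda\in\gf$. Since this is a degree-preserving linear bijection and $T_{\s,m}^{+}(A)$ is a subalgebra of $T_{\s,m}(A)$, the entire content of the first assertion is the identity $\Theta(x\heartsuit y)=\Theta(x)\qqs_\s\Theta(y)$ for all $x,y\in A\uot T(A)$.

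To prove this I would fix $x=a\uot u$ and $y=b\uot v$ with $a,b\in A$ and $u,v\in T(A)$, set $i=\deg\Theta(x)$ and $j=\deg\Theta(y)$, and expand with weight $\lambda=1$ the definition $x\heartsuit y=x\lozenge_\s Q(y)+Q(x)\lozenge_\s y+x\lozenge_\s y$. Each of the three summands is unwound directly from $\lozenge_\s=(m\ts\qqs_\s)(\id_A\ts\beta\ts\id_{T(A)})$ and $Q(a\uot u)=1_A\uot a\ts u$, using the unitality axioms $m(1_A\ts-)=m(-\ts1_A)=\id_A$ and $\s(c\ts1_A)=1_A\ts c$ of the unital braided algebra $A$: the factor $1_A$ inserted by $Q$ in an $A$-slot braids trivially whenever it meets $\beta$ and is then absorbed by the outer multiplication $m$. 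One finds, after applying $\Theta$, that the first summand becomes $a\ts(u\qqs_\s\Theta(y))$, the second becomes the image of $\Theta(x)\ts\Theta(y)$ under $(\id_A\ts\qqs_{\s(i,j-1)})(\beta_{i,1}\ts\id_A^{\ts j-1})$, and the third becomes its image under $(m\ts\qqs_{\s(i-1,j-1)})(\id_A\ts\beta_{i-1,1}\ts\id_A^{\ts j-1})$ --- precisely the three terms in the recursive definition of $\qqs_\s$ on $(a_1\ts\cdots\ts a_i)\qqs_\s(b_1\ts\cdots\ts b_j)$, so their sum is $\Theta(x)\qqs_\s\Theta(y)$. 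The degenerate degrees ($i=1$ and/or $j=1$) are treated by the same kind of, but shorter, computation, matching the three summands against the base case $a_1\qqs_\s b_1=a_1\ts b_1+\s(a_1\ts b_1)+m(a_1\ts b_1)$ and the two one-sided recursions for $a_1\qqs_\s(b_1\ts\cdots\ts b_j)$ and $(a_1\ts\cdots\ts a_i)\qqs_\s b_1$; there the usual flips $\beta_{0,1}$, $\beta_{1,0}$ and the unit again produce all three terms. This shows $\Theta$ is an isomorphism of algebras.

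It then remains to deduce the ``therefore'' clause. Since $(\mathcal{A},Q)$ is a Rota--Baxter algebra of weight $1$ (recalled above), the preceding lemma, applied with $\lambda=1$, shows that $(\mathcal{A}_\heartsuit,Q)$ is again a Rota--Baxter algebra of weight $1$; transporting this structure along the algebra isomorphism $\Theta$ makes $(T_{\s,m}^{+}(A),\Theta\,Q\,\Theta^{-1})$ a Rota--Baxter algebra of weight $1$. Finally $\Theta^{-1}(b_1\ts\cdots\ts b_n)=b_1\uot(b_2\ts\cdots\ts b_n)$ and $Q(b_1\uot(b_2\ts\cdots\ts b_n))=1_A\uot b_1\ts\cdots\ts b_n$, whence $\Theta\,Q\,\Theta^{-1}(x)=1_A\ts x=P^{+}(x)$, as claimed. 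I expect the only real difficulty to be the combinatorial bookkeeping in the verification of $\Theta(x\heartsuit y)=\Theta(x)\qqs_\s\Theta(y)$ --- keeping straight which braiding ($\beta_{i,1}$ or $\beta_{i-1,1}$) is generated by each of the three pieces of $\heartsuit$, tracking where the unit $1_A$ travels, and not overlooking the degenerate low-degree cases --- but this is routine rather than conceptual.
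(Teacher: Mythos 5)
Your proposal is correct and follows essentially the same route as the paper: the paper's proof defines the same linear bijection $f(a\uot x)=a\ot x$, expands the three summands of $\heartsuit$ via $\lozenge_\s$ and $Q$, absorbs the inserted units using the unitality of $m$ and the triviality of the braiding on $1_A$, and matches the result term by term with the recursive definition of $\qqs_\s$. Your additional attention to the degenerate low-degree cases and the explicit transport of $Q$ to $P^+$ only makes explicit what the paper leaves implicit.
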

\begin{proof} We define a linear map $f:A\uot T(A)\rightarrow
T^+(A)$ by $f(a\uot x)=a\ot x$ for any $a\uot x\in A\uot T(A)$. It
is apparent that $f$ is a bijection. Notice that for any $a,b\in
A$, $x\in A^{\ot i}$ and  $y\in A^{\ot j}$,\begin{eqnarray*}
\lefteqn{f((a\uot x)\heartsuit (b\uot y))}\\[3pt]
&=&f(Q+(a\uot x)\lozenge_\s (b\uot y)+(a\uot x)\lozenge_\s Q(b\uot y)+(a\uot x)\lozenge_\s (b\uot y))\\[3pt]
&=&f((1_A\uot a\ot x)\lozenge_\s (b\uot y) +(a\uot x)\lozenge_\s
(1_A\uot b\ot y)+(a\uot x)\lozenge_\s (b\uot y))\\[3pt]
&=&(m\ot \qqs_{\s(i+1,j)})(\id_A\ot \beta_{i+1,1} \ot\id_A^{\ot
j})(1_A\ot a\ot x\ot b\ot y)\\[3pt]
&&+(m\ot \qqs_{\s(i,j+1)})(\id_A\ot \beta_{i,1} \ot\id_A^{\ot
j+1})(a\ot x\ot 1_A\ot b\ot y)\\[3pt]
&&+(m\ot \qqs_{\s(i,1)})(\id_A\ot \beta_{i,1} \ot\id_A^{\ot
j})(a\ot x\ot b\ot y)\\[3pt]
&=&(\id_A\ot  \qqs_{\s  (i+1,j)})(\beta_{i+1,1}\ot \id_A^{\ot  j})(a\ot x\ot b\ot y)\\[3pt]
&&+a\ot  (x\qqs_\s  (b\ot y))\\[3pt]
&&+(m\ot \qqs_{\s  (i,j)} )(\id_A\ot  \beta_{i,1}\ot
\id_A^{\ot  j})(a\ot x\ot b\ot y)\\[3pt]
&=&(a\ot x)\qqs_\s (b\ot y)\\[3pt]
&=&f(a\uot x)\qqs_\s f(b\uot y).
\end{eqnarray*}
\end{proof}

We assume that $A$ is a braided algebra with unit $1_A$ and define
an endomorphism $P$ on $T_{\s,m}(A)$ as follows:\[\left\{
\begin{split}
P(\lambda)&=\lambda 1_A,\text{ if }\lambda\in \gf,\\[3pt]
P(x)&=P^+(x),\text{ if }x\in T^+(A).
\end{split}\right.
\]

\begin{proposition}\label{Prop:RB}
The pair $(T_{\s,m}(A),P)$ is a Rota-Baxter algebra of weight
$1$.\end{proposition}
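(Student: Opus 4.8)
The plan is to verify the weight-$1$ Rota--Baxter identity
$$P(x)\qqs_\s P(y)=P\big(x\qqs_\s P(y)\big)+P\big(P(x)\qqs_\s y\big)+P(x\qqs_\s y)$$
by a short case analysis, exploiting that both sides are $\gf$-bilinear in $(x,y)$: it suffices to let each of $x,y$ range over the spanning set $\{1\}\cup T^{+}(A)$, where $1$ denotes the unit of $T_{\s,m}(A)$ (its degree-$0$ component). First I would record two bookkeeping facts that make every term above well defined: $P$ maps $T^{+}(A)$ into $T^{+}(A)$, since $P^{+}(z)=1_A\ts z$ raises the degree; and the quantum quasi-shuffle of an $i$-letter word and a $j$-letter word is a combination of words of lengths between $\max(i,j)$ and $i+j$, so $\qqs_\s$ sends $T^{+}(A)\ts T^{+}(A)$ into $T^{+}(A)$. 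In particular $P$ coincides with $P^{+}$ on every product that occurs when $x,y$ both lie in $T^{+}(A)$, and that case is then exactly the weight-$1$ Rota--Baxter relation for $(T_{\s,m}^{+}(A),P^{+})$ proved in the preceding proposition.

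For $x=y=1$ one computes directly: the left-hand side $P(1)\qqs_\s P(1)=1_A\qqs_\s 1_A$ equals $2(1_A\ts 1_A)+1_A$ by the unital braided algebra axioms $\s(1_A\ts 1_A)=1_A\ts 1_A$ and $m(1_A\ts 1_A)=1_A$, whereas the right-hand side is $P(P(1))+P(P(1))+P(1)=2(1_A\ts 1_A)+1_A$ since $1\qqs_\s 1_A=1_A$; the two agree.

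The two mixed cases are the heart of the proof. For $x=1$ and a word $y=b_1\ts\cdots\ts b_j\in T^{+}(A)$, the right-hand side reduces, using that $1$ is the unit for $\qqs_\s$ and $P(1)=1_A$, to $P(P(y))+P(1_A\qqs_\s y)+P(y)=1_A\ts 1_A\ts y+1_A\ts(1_A\qqs_\s y)+1_A\ts y$, while the left-hand side is $1_A\qqs_\s(1_A\ts y)$. I would then unfold one step of the defining recursion for the quasi-shuffle of the single letter $1_A$ with the length-$(j+1)$ word $1_A\ts y$; its three terms, after applying $\beta_{1,1}(1_A\ts 1_A)=1_A\ts 1_A$ and $m(1_A\ts 1_A)=1_A$, are exactly $1_A\ts 1_A\ts y$, $1_A\ts(1_A\qqs_\s y)$ and $1_A\ts y$, so the identity holds. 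The case $x\in T^{+}(A)$, $y=1$ is symmetric: writing $x=x_1\ts\cdots\ts x_k$, the left-hand side is $(1_A\ts x)\qqs_\s 1_A$, which I would expand by its defining recursion using that the iterated braidings $\beta_{k,1}$ and $\beta_{k+1,1}$ are transparent on the unit — i.e. on a word tensored with $1_A$ they simply cycle $1_A$ to the front, a consequence of $\s(a\ts 1_A)=1_A\ts a$ together with the well-definedness of braid-group lifts — so that the three terms collapse to $1_A\ts(x\qqs_\s 1_A)$, $1_A\ts 1_A\ts x$ and $1_A\ts x$, which match the right-hand side.

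The main obstacle is confined to these mixed cases: one must split the indices in the quantum quasi-shuffle recursions correctly and, above all, verify the transparency of $1_A$ under the iterated braidings $\beta_{k,1}$. Once that is granted the proof closes immediately, the remaining input being only the already-established weight-$1$ Rota--Baxter identity on $T_{\s,m}^{+}(A)$ and the two unital axioms for $m$ and $\s$ on $1_A$.
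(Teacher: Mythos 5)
Your proof is correct and follows essentially the same route as the paper's: decompose $x,y$ by bilinearity into scalar and positive-degree parts, verify the scalar and mixed cases by unfolding one step of the quasi-shuffle recursion together with the unit axioms $\s(a\ot 1_A)=1_A\ot a$, $m(1_A\ot 1_A)=1_A$ and the transparency $\beta(x\uot 1_A)=1_A\uot x$. The only (harmless) difference is that for $x,y\in T^{+}(A)$ you invoke the Rota--Baxter identity already established for $(T^{+}_{\s,m}(A),P^{+})$, whereas the paper re-expands the recursion for $(1_A\ot x)\qqs_\s(1_A\ot y)$ directly; both are valid.
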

\begin{proof}First of all, for any $\lambda,\nu\in \gf$,\begin{eqnarray*}
\lefteqn{P\big(P(\lambda)\qqs_\s \nu \big)+P\big(\lambda\qqs_\s P(\nu )\big)+P( \lambda\nu)}\\[3pt]
&=& \lambda \nu\big(1_A\ot 1_A+1_A\ot 1_A+m(1_A \ot 1_A)\big)\\[3pt]
&=&(\lambda 1_A)\qqs_\s (\nu 1_A)=P(\lambda)\qqs_\s P(\nu).
\end{eqnarray*}

For any $x\in T^+(A)$,
\begin{eqnarray*}
\lefteqn{P\big(P(\lambda)\qqs_\s x \big)+P\big(\lambda\qqs_\s P(x )\big)+P( \lambda\qqs_\s x)}\\[3pt]
&=& \lambda 1_A\ot( 1_A\qqs_\s x)+\lambda 1_A\ot 1_A\ot x +\lambda 1_A \ot x\\[3pt]
&=&(\lambda 1_A)\qqs_\s  (1_A\ot x)=P(\lambda)\qqs_\s P(x),
\end{eqnarray*}
and
\begin{eqnarray*}
\lefteqn{P\big(P(x)\qqs_\s \lambda \big)+P\big(x\qqs_\s P(\lambda )\big)+P( x\qqs_\s \lambda)}\\[3pt]
&=& \lambda 1_A\ot 1_A\ot x+\lambda 1_A\ot(x\qqs_\s  1_A) +\lambda 1_A \ot x\\[3pt]
&=&(1_A\ot x)\qqs_\s (\lambda 1_A)=P(x)\qqs_\s P(\lambda),
\end{eqnarray*}

For any $x,y\in T^+(A)$,\begin{eqnarray*}
\lefteqn{P\big(P(x)\qqs_\s y \big)+P\big(x\qqs_\s P(y )\big)+P( x\qqs_\s y)}\\[3pt]
&=&  1_A\ot\big( (1_A\ot x)\qqs_\s y\big)+ 1_A\ot\big(x\qqs_\s  (1_A\ot y)\big) + 1_A \ot( x\qqs_\s y)\\[3pt]
&=&(1_A\ot x)\qqs_\s ( 1_A\ot y)=P(x)\qqs_\s P(y).
\end{eqnarray*}
In the above computations, we used the fact that $\beta(1_A\uot
x)=x\uot 1_A$ and $\beta(x\uot 1_A)=1_A\uot x$ for any $x\in
T^+(A)$.

Finally,
\begin{eqnarray*}
\lefteqn{P\big(P(\lambda+x)\qqs_\s (\nu+y) \big)+P\big((\lambda+x)\qqs_\s P(\nu+y )\big)+P\big( (\lambda+x)\qqs_\s (\nu+y)\big)}\\[3pt]
&=&P\big(P(\lambda)\qqs_\s \nu+P(\lambda)\qqs_\s y+P(x)\qqs_\s
\nu+P(x)\qqs_\s y \big)\\[3pt]
&&+P\big(\lambda\qqs_\s P(\nu )+\lambda\qqs_\s P(y )+x\qqs_\s
P(\nu )+x\qqs_\s P(y
)\big)\\[3pt]
&&+P\big( \lambda\qqs_\s \nu+\lambda\qqs_\s y+x\qqs_\s
\nu+x\qqs_\s
y\big)\\[3pt]
&=&P(\lambda)\qqs_\s P(\nu)+P(\lambda)\qqs_\s P(y)+P(x)\qqs_\s P(\nu)+P(x)\qqs_\s P(y)\\[3pt]
&=&P(\lambda+x)\qqs_\s P(\nu+y).
\end{eqnarray*}\end{proof}


We introduce the notion of Rota-Baxter algebras in $\cyd$. A
Rota-Baxter algebra $(R,P)$ is said to be in $\cyd$ if $R$ is an
object in $\cyd$ and $P$ is a morphism in $\cyd$. There exists a
unique linear extension of $P$ to the smash product $\wt{P}:R\#H
\ra R\#H $ given by $\wt{P}(a\# h)=P(a)\# h$.

\begin{lemma}\label{Lem:Bosonization}
Suppose that $(V,P)$ is a Rota-Baxter operator of weight $1$ in $\cyd$. Then so is the pair $(V\# H, \wt{P})$.
\end{lemma}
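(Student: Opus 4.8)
The plan is to reduce the Rota-Baxter identity on $V\#H$ to the one already assumed on $V$, the only new ingredient being that $P$, being a morphism in $\cyd$, is in particular a left $H$-module map. Since $V\#H$ is an associative algebra (it is an $H$-Hopf bimodule algebra, as recalled above) and $\wt P$ is the well-defined linear endomorphism $\wt P(a\#h)=P(a)\#h$, it suffices to verify
\[
\wt P(x)\,\wt P(y)=\wt P\big(x\,\wt P(y)\big)+\wt P\big(\wt P(x)\,y\big)+\wt P(xy)
\]
for all $x,y\in V\#H$; by linearity one may take $x=a\#h$ and $y=b\#k$ with $a,b\in V$ and $h,k\in H$. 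Note that no unitality of $V$ is needed, the Rota-Baxter axiom not referring to a unit.

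First I would expand the left-hand side using the smash-product formula $(v\#h)(v'\#h')=\sum v(h_{(1)}\cdot v')\#h_{(2)}h'$:
\[
\wt P(a\#h)\,\wt P(b\#k)=(P(a)\#h)(P(b)\#k)=\sum P(a)\big(h_{(1)}\cdot P(b)\big)\#h_{(2)}k .
\]
Because $P$ commutes with the $H$-action we have $h_{(1)}\cdot P(b)=P(h_{(1)}\cdot b)$, so this equals $\sum P(a)P(h_{(1)}\cdot b)\#h_{(2)}k$; applying the weight-$1$ Rota-Baxter identity in $V$ to the pair $(a,\,h_{(1)}\cdot b)$ rewrites each factor $P(a)P(h_{(1)}\cdot b)$ as $P\big(aP(h_{(1)}\cdot b)\big)+P\big(P(a)(h_{(1)}\cdot b)\big)+P\big(a(h_{(1)}\cdot b)\big)$.

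Next I would expand the three summands on the right-hand side the same way; using the smash-product formula and again $h_{(1)}\cdot P(b)=P(h_{(1)}\cdot b)$ one obtains $\wt P\big((a\#h)\wt P(b\#k)\big)=\sum P\big(aP(h_{(1)}\cdot b)\big)\#h_{(2)}k$, then $\wt P\big(\wt P(a\#h)(b\#k)\big)=\sum P\big(P(a)(h_{(1)}\cdot b)\big)\#h_{(2)}k$, and $\wt P\big((a\#h)(b\#k)\big)=\sum P\big(a(h_{(1)}\cdot b)\big)\#h_{(2)}k$. Adding these three expressions term by term reproduces the rewritten left-hand side, which finishes the proof.

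There is no serious obstacle here; the one place where the hypothesis is genuinely used is that $P$ is a morphism in $\cyd$ and not merely an algebra endomorphism of $V$ — it is precisely the compatibility of $P$ with the left $H$-action that allows $h_{(1)}\cdot P(b)$ to be pulled inside $P$, which makes the Rota-Baxter identity of $V$ applicable summand by summand. The comodule compatibility of $P$ plays no role in this particular statement, since the smash-product multiplication only involves the module structure of $V$; it will of course matter once one wants $\wt P$ to interact with the coalgebra or Hopf structure.
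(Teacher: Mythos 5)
Your proof is correct and follows essentially the same route as the paper's: expand both sides with the smash-product formula, use that $P$ is a left $H$-module morphism to rewrite $h_{(1)}\cdot P(b)$ as $P(h_{(1)}\cdot b)$, and apply the weight-$1$ Rota--Baxter identity of $V$ to each summand. Your observation that only the module (not the comodule) compatibility of $P$ is used here is also consistent with the paper's computation.
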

\begin{proof}
For any $v,v'\in V$ and $h,h'\in H$,\begin{eqnarray*}
\lefteqn{\wt{P}\big(\wt{P}(v\# h)(v'\# h')\big)+\wt{P}\big((v\# h)\wt{P}(v'\# h')\big)+\wt{P}\big((v\# h)(v'\# h')\big)}\\[3pt]
&=&\wt{P}\big((P(v)\# h)(v'\# h')\big)+\wt{P}\big((v\# h)(P(v')\# h')\big)+\wt{P}\big((v\# h)(v'\# h')\big)\\[3pt]
&=&\sum \left(P(v)(h_{(1)}\cdot v')\ot h_{(2)}h'+v\big(h_{(1)}\cdot P(v')\big)\ot h_{(2)}h'+ P\big(v(h_{(1)}\cdot v')\big)\ot h_{(2)}h'\right)\\[3pt]
&=&\sum \Big(P(v)(h_{(1)}\cdot v')+v P(h_{(1)}\cdot v')+P\big(v(h_{(1)}\cdot v')\big)\Big)\ot h_{(2)}h'\\[3pt]
&=&\sum P(v)\big(h_{(1)}\cdot P(v')\big)\ot h_{(2)}h'\\[3pt]
&=&(P(v)\# h)(P(v')\# h') \\[3pt]
&=&\wt{P}(v\# h)\wt{P}(v'\# h').
\end{eqnarray*}\end{proof}

Return to quasi-shuffle algebras: we take $V=T_{\s,m}(M^{R})$ in
the above lemma and $P$ the endomorphism of $T_{\s,m}(M^{R})$
defined before Proposition \ref{Prop:RB}. We define again by
$\wt{P}$ the following composition
$$\xymatrix{
T_H^c(M)\ar[r]^-{\phi\circ\xi} & T(M^R)\#H \ar[r]^-{\wt{P}} &
T(M^R)\#H \ar[r]^-{(\phi\circ\xi)^{-1}} & T_H^c(M)},$$where
$\phi=\varphi\ot \id_H$.

The following theorem is a corollary of Proposition \ref{Prop:RB}
and Lemma \ref{Lem:Bosonization}.

\begin{theorem}
Suppose $M$ is a unital $H$-Hopf bimodule algebra. Then the pair
$(T_H^c(M),\ast, \wt{P})$ is a Rota-Baxter algebra of weight $1$.
\end{theorem}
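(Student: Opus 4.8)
The plan is to assemble the result directly from the pieces already in place. First I would record the three facts we are allowed to use: by Proposition~\ref{Prop:RB}, $(T_{\s,m}(M^R),P)$ is a Rota-Baxter algebra of weight $1$; by Lemma~\ref{Lem:Bosonization}, applied with $V=T_{\s,m}(M^R)$, the pair $(T_{\s,m}(M^R)\#H,\wt P)$ is again a Rota-Baxter algebra of weight $1$ (here I must first check the hypothesis of the lemma, namely that $P$ is a morphism in $\cyd$ — this is immediate since $P$ acts as $P^+(x)=1_{M^R}\ot x$ on positive degrees and as the identity scaled by the scalar on degree $0$, and both $1_{M^R}\ot(-)$ and the grading are Yetter-Drinfeld morphisms because $1_{M^R}$ is an invariant and coinvariant element); and by Theorem~2.12 together with the Radford biproduct isomorphism $\xi$, the map $\phi\circ\xi\colon T_H^c(M)\to T_{\s,m}(M^R)\#H$ is an isomorphism of algebras, where $\phi=\varphi\ot\id_H$.

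The core of the argument is then a transport-of-structure observation: if $(R,P)$ is a Rota-Baxter algebra of weight $\lambda$ and $g\colon R'\to R$ is an algebra isomorphism, then $(R',g^{-1}Pg)$ is a Rota-Baxter algebra of the same weight. I would state this as a one-line sublemma and verify it by substituting into the Rota-Baxter identity: for $x,y\in R'$, writing $u=g(x)$, $v=g(y)$, one has $(g^{-1}Pg)(x)\cdot(g^{-1}Pg)(y)=g^{-1}\!\big(P(u)P(v)\big)$ since $g$ is an algebra map, and $P(u)P(v)=P(uP(v))+P(P(u)v)+\lambda P(uv)$ unravels, again using that $g$ and $g^{-1}$ are algebra maps, to $g^{-1}\!\big(P(u\,g(g^{-1}Pg(y)))\big)+\cdots$, i.e.\ exactly $(g^{-1}Pg)\big(x\cdot(g^{-1}Pg)(y)\big)+(g^{-1}Pg)\big((g^{-1}Pg)(x)\cdot y\big)+\lambda(g^{-1}Pg)(x\cdot y)$. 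This is entirely routine.

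Applying the sublemma with $R=T_{\s,m}(M^R)\#H$, $P=\wt P$ (the bosonized operator of Lemma~\ref{Lem:Bosonization}), $R'=T_H^c(M)$ with its $\ast$-product, and $g=\phi\circ\xi$, gives that $\big(T_H^c(M),\ast,(\phi\circ\xi)^{-1}\wt P(\phi\circ\xi)\big)$ is a Rota-Baxter algebra of weight $1$. Since the composite $(\phi\circ\xi)^{-1}\wt P(\phi\circ\xi)$ is precisely the operator the paper has just renamed $\wt P$ on $T_H^c(M)$, this is the assertion of the theorem. I would close by remarking that unitality of $M$ is used exactly to guarantee that $M^R$ is a unital braided algebra, which is what makes $P^+(x)=1_{M^R}\ot x$ well defined and hence Proposition~\ref{Prop:RB} applicable; no other hypothesis is needed.

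The only genuine obstacle is bookkeeping rather than mathematics: one must be careful that the three "$\wt P$"s floating around — the bosonization $\wt P$ of Lemma~\ref{Lem:Bosonization} on $T(M^R)\#H$, and the transported $\wt P$ on $T_H^c(M)$ — are identified via $\phi\circ\xi$ in a way compatible with how $\xi$ intertwines the $\ast$-product with the smash product. Since $\xi$ is a \emph{bialgebra} isomorphism by the Radford construction (in particular an algebra isomorphism) and $\varphi$ is a morphism of braided bialgebras by Theorem~2.12, $\phi\circ\xi$ is an algebra isomorphism, so the sublemma applies with no further compatibility checks. Thus the proof is short: verify the $\cyd$-morphism hypothesis for $P$, invoke Lemma~\ref{Lem:Bosonization}, prove the transport sublemma, and conclude.
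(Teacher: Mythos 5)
Your proposal is correct and follows essentially the same route as the paper: the paper also obtains the theorem as a direct corollary of Proposition~\ref{Prop:RB} and Lemma~\ref{Lem:Bosonization}, transporting the Rota--Baxter structure on $T_{\s,m}(M^R)\#H$ back to $(T_H^c(M),\ast)$ along the algebra isomorphism $\phi\circ\xi$, which is exactly how $\wt{P}$ on $T_H^c(M)$ is defined. Your added verifications (that $P$ is a morphism in $\cyd$ via the invariance and coinvariance of the unit, and the explicit transport-of-structure sublemma) are details the paper leaves implicit, and they are correct.
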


\begin{remark}By combining Lemma 3.2 and the above theorem, we can construct a new product on $T_H^c(M)$ which brings $T_H^c(M)$ another Rota-Baxter algebra structure with the same $\wt{P}$.\end{remark}

\subsection{Universal property} We study the universal property
of $(T_H^c(M),\ast)$. For this reason, it should start by
describing the suitable category in which $(T_H^c(M),\ast)$ is a
free object.

Let $(B,\mu_B,\Delta_B,\varepsilon_B)$ be a bialgebra such that:
\begin{enumerate}
\item $\mathrm{corad}(B)$ is a subalgebra of $B$;
\item there exist two bialgebra maps $g:B\rightarrow H$ and $k:H\rightarrow B$ with $g\circ k=\id_H$.
\end{enumerate}

If this is the case, we can endow $B$ with an $H$-Hopf bimodule algebra
structure with the help of these maps. As before, the left and right comodule structures of $B$ are given by $\varrho_L=(g\ot \id_B)\Delta_B$ and $\varrho_R=(\id_B\ot g)\Delta_B$ respectively. The $H$-bimodule structure is given by $h\cdot b\cdot h'=k(h)bk(h')$ for $h,h'\in
H$ and $b\in B$. Since both $g$ and $k$ are Hopf algebra maps with $g\circ k={\id}_H$, easy verifications show that
$\varrho_L(h\cdot b\cdot h')=\Delta_B(h)\varrho_L(b)\Delta_B(h')$
and $\varrho_R(h\cdot b\cdot
h')=\Delta_B(h)\varrho_R(b)\Delta_B(h')$. Since $\mu_B((b\cdot
h)\otimes b')=bk(h)b'=\mu_B(b\otimes (h\cdot b'))$, $\mu_B$
induces a multiplication $B\otimes _H B\rightarrow B$. In addition, by a
direct verification, $\mu_B$ is a morphism in $\chb$.

On the other hand, the condition $g\circ k=\id_H$ implies
$B=\mathrm{Im}k\oplus \mathrm{Ker} g$ as vector space.

\begin{theorem}[Universal property of $T_H^c(M)$]Under the assumptions above, for any $H$-bimodule algebra $M$ and any morphism $f:B\rightarrow M$ in $\chb$ such
that $f\circ k=0$, $f$ is an algebra map and
$f(\mathrm{corad}(B))=0$, there exists a unique
bialgebra map $F:B\rightarrow T^c_H(M)$ making the following
diagrams commute:
\begin{displaymath}
\xymatrix{T^c_H(M)\ar[d]_\pi&B\ar@{.>}[l]_-F\ar[dl]^g\\
H&},\ \ \ 
\xymatrix{T^c_H(M)\ar[d]_p&B\ar@{.>}[l]_-F\ar[dl]^f\\
M&},
\end{displaymath}where $\pi$ and $p$ are the projections from $T^c_H(M)$ onto $H$ and $M$ respectively.\end{theorem}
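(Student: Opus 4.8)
The plan is to construct $F$ purely as a coalgebra map via the universal property of the cotensor coalgebra (Proposition~\ref{Prop:UP1}), and then to upgrade it to a bialgebra map by reducing multiplicativity to the uniqueness clause of that same universal property, exactly as was done for the product $\ast$ itself.

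First I would apply Proposition~\ref{Prop:UP1} with the coalgebra map $g:B\to H$ and the map $f:B\to M$: by hypothesis $f$ is a morphism in $\chb$, hence an $H$-bicomodule map for the bicomodule structure $\varrho_L=(g\otimes\id_B)\Delta_B$, $\varrho_R=(\id_B\otimes g)\Delta_B$ on $B$, and $f(\mathrm{corad}(B))=0$; so there is a unique coalgebra map $F:B\to T^c_H(M)$ with $\pi F=g$ and $pF=f$, namely $F=g+\sum_{n\geq 1}f^{\otimes n}\circ\Delta_B^{(n-1)}$. This already gives the uniqueness asserted in the theorem, since any bialgebra map $F'$ with $\pi F'=g$ and $pF'=f$ is in particular a coalgebra map to which Proposition~\ref{Prop:UP1} applies. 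Moreover $F(1_B)=g(1_B)+\sum_{n\geq 1}f(1_B)^{\otimes n}=1_H$ because $1_B=k(1_H)$ and $f\circ k=0$, so $F$ preserves the unit; and it preserves the counit automatically, being a coalgebra map.

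Next I would show $F\mu_B=\ast\circ(F\otimes F)$ as maps $B\otimes B\to T^c_H(M)$. Both are coalgebra maps out of $B\otimes B$ with its tensor coproduct ($\mu_B$ is a coalgebra map since $B$ is a bialgebra, $\ast$ is one since $(T^c_H(M),\ast,\Delta)$ is a bialgebra, and $F$ is one by construction), so by the uniqueness part of Proposition~\ref{Prop:UP1} it suffices to check that they agree after composing with $\pi$ and with $p$, and that the resulting $M$-valued map annihilates $\mathrm{corad}(B\otimes B)$. The $H$-component is immediate: $\pi F\mu_B=g\mu_B=\mu_H(g\otimes g)$ since $g$ is a bialgebra map, while $\pi\circ\ast\circ(F\otimes F)=\mu_H(\pi\otimes\pi)\circ(F\otimes F)=\mu_H(g\otimes g)$, using $\pi\circ\ast=\mu_H(\pi\otimes\pi)$.

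The crux is the $M$-component. Since $p\circ\ast=\cdot_L(\pi\otimes p)+\cdot_R(p\otimes\pi)+\mu(p\otimes p)$, one has $p\circ\ast\circ(F\otimes F)=\cdot_L(g\otimes f)+\cdot_R(f\otimes g)+\mu(f\otimes f)$, so the task is to establish
\[
f(bb')=g(b)\cdot f(b')+f(b)\cdot g(b')+f(b)f(b'),\qquad b,b'\in B,
\]
with $\cdot$ denoting the $H$-actions on $M$; indeed $pF\mu_B=f\mu_B$, so this identity is exactly the equality of the two $M$-components. To prove it I would use the splitting $b=k(g(b))+b_0$ with $b_0\in\ker g$ (and similarly $b'$) coming from $g\circ k=\id_H$: expanding $bb'$, the term $k(g(b))k(g(b'))=k(g(b)g(b'))$ is killed by $f$ because $f\circ k=0$; the mixed terms $k(g(b))b_0'$ and $b_0k(g(b'))$ contribute $g(b)\cdot f(b')$ and $f(b)\cdot g(b')$ respectively, using that $f$ is an $H$-bimodule map for the structure $h\cdot b\cdot h'=k(h)bk(h')$ together with $f(b_0)=f(b)$ and $f(b_0')=f(b')$; and the term $b_0b_0'$ contributes $f(b_0)f(b_0')=f(b)f(b')$ because $f$ is an algebra map. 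Finally, each of the three summands of $p\circ\ast\circ(F\otimes F)$ has $f$ in some tensor slot and $\mathrm{corad}(B\otimes B)\subseteq\mathrm{corad}(B)\otimes\mathrm{corad}(B)$ with $f(\mathrm{corad}(B))=0$, so this $M$-valued map vanishes on $\mathrm{corad}(B\otimes B)$, completing the proof that $F$ is a bialgebra map. The main obstacle is exactly the displayed identity: it is where the three hypotheses on $f$ (morphism in $\chb$, algebra map, $f\circ k=0$) must be combined with the induced $H$-Hopf bimodule algebra structure on $B$ so that the three contributions to $f(bb')$ match precisely the three terms of $p\circ\ast$; the rest is formal from the two universal properties.
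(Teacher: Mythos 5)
Your proof is correct and follows essentially the same route as the paper's: construct $F$ as the coalgebra map supplied by Proposition~\ref{Prop:UP1}, then reduce multiplicativity to the uniqueness clause of that proposition by comparing the $\pi$- and $p$-components of $F\mu_B$ and $\ast\circ(F\otimes F)$, with the key $M$-component identity checked via the splitting $B=\mathrm{Im}\,k\oplus\ker g$ together with $f\circ k=0$, the $H$-bimodule property and multiplicativity of $f$. The only cosmetic difference is that the paper also records explicitly that the two $p$-components are $H$-bicomodule maps before invoking uniqueness, a point your argument leaves implicit.
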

\begin{proof}By Proposition \ref{Prop:UP1}, there exists a unique coalgebra map $F:B\rightarrow T^c_H(M)$ making the above diagrams commute. We want to show that $F$ is an algebra map, i.e., $F(bb')=F(b)\ast F(b')$ whenever $b,b'\in B$. Define two maps $F_1,F_2: B\otimes B\rightarrow T^c_H(M)$ by $F_1(b\otimes b')=F(bb')$ and $F_2(b\otimes b')=F(b)\ast F(b')$
respectively. Since $F_1=F\mu_B$ and both $F$ and $\mu_B$ are
coalgebra maps, $F_1$ is a coalgebra map. So is $F_2$ for a
similar reason.

Consider the following commutative diagrams:
\begin{displaymath}
\xymatrix{T^c_H(M)\ar[d]_\pi&B\otimes B\ar[l]_-{F_1}\ar[dl]^{\pi  F_1}\\
H&},\ \ \  \xymatrix{T^c_H(M)\ar[d]_p&B\otimes
B\ar[l]_-{F_1}\ar[dl]^{p F_1}\\
M&},
\end{displaymath}

Since $\pi  F_1=\pi  F \mu_B= g \mu_B$ is the composition of two
coalgebra maps $g$ and $\mu_B$, $\pi  F_1$ is a coalgebra map.
Since $p F_1=p F \mu_B= f \mu_B$ is the composition of two
$H$-bicomodule morphisms $f$ and $\mu_B$, $p\circ F_2$ is a
$H$-bicomodule morphism. Note that\begin{eqnarray*}
p F_1(\mathrm{corad}(B\otimes B))&=&f \mu_B(\mathrm{corad}(B\otimes B))\\[3pt]
&\subset&f \mu_B(\mathrm{corad}(B)\otimes \mathrm{corad}(B))\\[3pt]
&=&f(\mathrm{corad}(B))=0.
\end{eqnarray*}
Hence, $F_1$ is the coalgebra map induced by $\pi F_1$ and $p F_1$
from Proposition \ref{Prop:UP1}.

Now consider another two commutative diagrams:
\begin{displaymath}
\xymatrix{T^c_H(M)\ar[d]_\pi&B\otimes B\ar[l]_-{F_2}\ar[dl]^{\pi  F_2}\\
H&},\ \ \ \xymatrix{T^c_H(M)\ar[d]_p&B\otimes
B\ar[l]_-{F_2}\ar[dl]^{p F_2}\\
M&},
\end{displaymath}

We have $\pi  F_2=\pi \circ\ast\circ (F\otimes F)=\mu_H (\pi\otimes
\pi)(F\otimes F)=\mu_H(g\otimes g)$, and $p F_2=\cdot_L(g\otimes
f)+\cdot_R (f\otimes g)+\mu (f\otimes f)$, where $\mu$ is the
multiplication of $M$. Evidently, $\pi  F_2$ is a coalgebra map
and $p  F_2$ is an $H$-bicomodule morphism. Note that $p
F_2(\mathrm{corad}(B\otimes B))\subset p
F_2(\mathrm{corad}(B)\otimes \mathrm{corad}(B))=(\cdot_L(g\otimes
f)+\cdot_R(f\otimes g)+\mu (f\otimes f))(\mathrm{corad}(B)\otimes
\mathrm{corad}(B))=0$. Hence, $F_2$ is the coalgebra map induced
by $\pi F_2$ and $p F_2$ from Proposition \ref{Prop:UP1}.

The last step is to show that $\pi  F_1=\pi  F_2$ and $p
 F_1=p  F_2$. Observe that for any $b,b'\in B$,
$$\pi F_1(b\otimes b')=g(bb')=g(b)g(b')=\pi F_2(b\otimes b').$$
By the decomposition $B=\mathrm{Im}k\oplus \mathrm{Ker} g$, we
write $b=k(h)+d$ and $b'=k(h')+d'$ for some $h,h'\in H$ and
$d,d\in \mathrm{Ker} g$. Then\begin{eqnarray*}\lefteqn{ p
F_2(b\otimes b')}\\[3pt]
&=&h \cdot f(k(h'))+g(d)\cdot f(k(h'))+ h\cdot f(d') +g(d)\cdot f( d'))\\[3pt]
&&+f(k(h))\cdot h'+ f(d)\cdot h'+ f(k(h))\cdot g( d') +f(d)\cdot g( d')\\[3pt]
&&+f(k(h))f( k(h'))+f(d)f( k(h'))+ f(k(h))f( d') +f(d)f( d')\\[3pt]
&=&f(h\cdot d')+ f(d\cdot h')+f(d)f( d')\\[3pt]
&=&f(bb')\\[3pt]
&=&p  F_1(b\otimes b'),
\end{eqnarray*}
where the second equality follows from the fact that $f$ is an
$H$-bimodule map and $f k=0$. This completes our proof.\end{proof}

\section{Examples}In this section, we will provide some concrete
examples of cofree Hopf algebras on Hopf bimodule algebras,
including universal Clifford algebras and universal quantum
groups.

\subsection{Universal Clifford algebras}

Let $V$ be a vector space, $Q:V\ra\mathbb{K}$ be a quadratic form
on $V$ and $<\cdot,\cdot>$ be the associated symmetric bilinear form
via polarization. The Clifford algebra $Cl(V,Q)$ on $V$ is the
associated $\mathbb{K}$-algebra generated by $V$ with respect to
the relations $uv+vu=<u,v>.1$ for any $u,v\in V$.
\par
Let $v_1,\cdots,v_n$ be a linear basis of $V$. The algebra
$Cl(V,Q)$ has generators $v_1,\cdots,v_n$ and relations
$v_iv_j+v_jv_i=<v_i,v_j>.1$.

\begin{definition}
The universal Clifford algebra $Cl(V)$ is generated as a
$\mathbb{K}$-algebra by $\epsilon$, $v_i$, $\xi_{ij}$ for $1\leq
i\leq j\leq n$ and relations:
$$\epsilon^2=1,\ \ \epsilon\xi_{ij}\epsilon^{-1}=\xi_{ij},\ \ \epsilon v_i\epsilon^{-1}=-v_i,$$
$$v_iv_j+v_jv_i=\xi_{ij},\ \ \xi_{ij}\xi_{kl}=\xi_{kl}\xi_{ij},$$
where $1\leq i\leq j\leq n$ and $1\leq k\leq l\leq n$.
\end{definition}

Let $I$ be the ideal in $Cl(V)$ generated by
$\xi_{ij}-Q(v_i,v_j).1$ for $1\leq i\leq j\leq n$. Then the
Clifford algebra $Cl(V,Q)$ is a subalgebra of the quotient algebra
$Cl(V)/I$ generated by $v_i$ for $i=1,\cdots,n$.
\par
The universal Clifford algebra admits a unique Hopf algebra structure defined by:
$$\Delta(\epsilon)=\epsilon\ts \epsilon,\ \ \Delta(v_i)=\epsilon\ts v_i+v_i\ts 1,\ \ \Delta(\xi_{ij})=1\ts \xi_{ij}+\xi_{ij}\ts 1,$$
$$\ve(\epsilon)=1,\ \ \ve(v_i)=0,\ \ \ve(\xi_{ij})=0.$$
Since
$$\Delta(\xi_{ij}-Q(v_i,v_j).1)=1\ts (\xi_{ij}-Q(v_i,v_j).1)+(\xi_{ij}-Q(v_i,v_j).1)\ts 1+Q(v_i,v_j).1\ts 1$$
and $Q$ is symmetric, the ideal $I$ is a Hopf ideal if and only if $Q(v_i,v_j)=0$ for any $i,j=1,\cdots,n$. If this is the case, the quotient $Cl(V)/I$ is isomorphic to the exterior algebra $\bigwedge(V)$ on $V$.
\par
We want to construct the universal Clifford algebra in the
framework of cofree Hopf algebras on Hopf bimodule algebras.
\par
Let $H=\mathbb{K}[\mathbb{Z}/2]= \mathbb{K}.1\oplus
\mathbb{K}.\epsilon$ be the group algebra. It admits a Hopf
algebra structure by letting $1$ and $\epsilon$ be group-like. Let
$W=U\ts H$ where $U$ is the vector space generated by $v_i$ and
$\xi_{ij}$ for $1\leq i\leq j\leq n$. It admits an $H$-Hopf
bimodule algebra structure as follows:
\begin{enumerate}
\item $U$ admits a left $H$-module and comodule strucure: for $1\leq i\leq j\leq n$,
$$\delta_L(v_i)=\epsilon\ts v_i,\ \ \delta_L(\xi_{ij})=1\ts\xi_{ij},\ \ \epsilon.v_i=-v_i,\ \ \epsilon.\xi_{ij}=\xi_{ij}.$$
\item Left $H$-module and comodule structures: for $u\in U$ and
$h,h'\in H$,
$$\delta_L(u\ts h)=\sum u_{(-1)}h_{(1)}\ts u_{(0)}\ts h_{(2)},\ \ h'.(u\ts h)=\sum h_{(1)}'.u\ts h_{(2)}'h.$$
\item Right $H$-module and comodule structures: for $u\in U$ and
$h,h'\in H$,
$$\delta_R(u\ts h)=\sum u\ts h_{(1)}\ts h_{(2)},\ \ (u\ts h)h'=u\ts hh'.$$
\item The multiplication $m:W\ts W\ra W$ is uniquely determined by: for $1\leq i\leq j\leq n$,
$$m(v_i\ts v_jh')=\xi_{ij}h',\ \ m(v_i\epsilon\ts v_jh')=-\xi_{ij}\epsilon h',$$
and on all other elements, $m$ gives zero. Notice that we omitted the tensor product inside of $W$ for simplification.
\end{enumerate}

We let $T_H^c(W)$ denote the cofree Hopf algebra built on the H-Hopf bimodule algebra $W$ and $Q_H(W)$ its subalgebra generated by $H$ and $W$. The multiplication in $Q_H(W)$ is denoted by $\ast$.

\begin{lemma}
There exists a Hopf algebra morphism $\varphi:Cl(V)\ra Q_H(W)$ defined by
$$v_i\mapsto v_i,\ \ \epsilon\mapsto\epsilon,\ \ \xi_{ij}\mapsto \xi_{ij}.$$
\end{lemma}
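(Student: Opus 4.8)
The plan is to use the fact that $Cl(V)$ is presented by generators and relations: it is enough to specify elements of $Q_H(W)$ for the generators, check that they satisfy every defining relation of $Cl(V)$ for the product $\ast$, and then — once $\varphi$ is a well-defined algebra map — verify the coalgebra compatibility on generators. Concretely I would set $\varphi(\epsilon)=\epsilon\in H\subset Q_H(W)$ and $\varphi(v_i)=v_i\ot 1_H$, $\varphi(\xi_{ij})=\xi_{ij}\ot 1_H$, viewing the latter as the degree-one elements of $T^c_H(W)$; since $Q_H(W)$ is by definition generated by $H$ and $W$, the image of $\varphi$ automatically lies in $Q_H(W)$.

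First I would record that, by the explicit formula for $f$ in the construction of $\ast$, the restriction of $\ast$ to $H\ast W$ and to $W\ast H$ is exactly the left, resp.\ right, $H$-module action of $W$ (the $g$-term and all higher $f^{\ot n}$-terms vanish in these bidegrees). Together with $\epsilon$ being group-like and $\epsilon^{-1}=\epsilon$, this reduces $\epsilon^{2}=1$, $\epsilon\xi_{ij}\epsilon^{-1}=\xi_{ij}$ and $\epsilon v_i\epsilon^{-1}=-v_i$ to the identities $\epsilon^{2}=1_H$, $\epsilon\cdot\xi_{ij}=\xi_{ij}$ and $\epsilon\cdot v_i=-v_i$ built into the $H$-Hopf bimodule algebra $W$.

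For the remaining two families of relations I would transport the computation to the quantum quasi-shuffle algebra through the structure theorem: $(T^c_H(W)^R,\ast)\cong T_{\s,m}(W^R)$ as braided bialgebras, with $W^R\cong U$, $v_i\mapsto v_i$, $\xi_{ij}\mapsto\xi_{ij}$, and the product of two degree-one elements computed by the base case $a\qqs_\s b=a\ot b+\s(a\ot b)+m(a\ot b)$. Because $\delta_L(\xi_{ij})=1_H\ot\xi_{ij}$, the braiding $\s$ acts as the flip whenever the left tensorand is a $\xi$, and $m(\xi_{ij}\ot\xi_{kl})=0$; hence $\xi_{ij}\qqs_\s\xi_{kl}=\xi_{ij}\ot\xi_{kl}+\xi_{kl}\ot\xi_{ij}$ is symmetric, which gives $\xi_{ij}\ast\xi_{kl}=\xi_{kl}\ast\xi_{ij}$. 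For the Clifford relation, $\delta_L(v_i)=\epsilon\ot v_i$ forces $\s(v_i\ot v_j)=(\epsilon\cdot v_j)\ot v_i=-v_j\ot v_i$, so in $v_i\qqs_\s v_j+v_j\qqs_\s v_i$ the pure tensor terms cancel and only $m(v_i\ot v_j)+m(v_j\ot v_i)$ survives; by the definition of $m$ on $W$ (in characteristic $0$, with the appropriate normalization on the diagonal $i=j$) this equals $\xi_{ij}$, so $v_i\ast v_j+v_j\ast v_i=\xi_{ij}$. Thus $\varphi$ is a well-defined algebra morphism into $Q_H(W)$.

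Finally I would handle the coalgebra structure. The subalgebra $Q_H(W)$ is a graded sub-bialgebra of $T^c_H(W)$ whose degree-zero part is the Hopf algebra $H$, hence it admits an antipode, and a bialgebra map between Hopf algebras automatically intertwines antipodes, so it suffices to see that $\varphi$ is a bialgebra map. Since $Cl(V)$ is generated as an algebra by $\epsilon,v_i,\xi_{ij}$ and the comultiplications and counits on both sides are algebra maps, it is enough to verify $\Delta\varphi=(\varphi\ot\varphi)\Delta$ and $\ve\varphi=\ve$ on these generators, which is immediate from the definitions on $T^c_H(W)$: $\Delta(v_i\ot1_H)=\delta_L(v_i\ot1_H)+\delta_R(v_i\ot1_H)=\epsilon\ot(v_i\ot1_H)+(v_i\ot1_H)\ot1_H$ matches $\Delta(v_i)=\epsilon\ot v_i+v_i\ot1$; likewise $\Delta(\xi_{ij}\ot1_H)=1_H\ot(\xi_{ij}\ot1_H)+(\xi_{ij}\ot1_H)\ot1_H$ matches $\Delta(\xi_{ij})$, $\Delta(\epsilon)=\epsilon\ot\epsilon$, and the counit kills $v_i$ and $\xi_{ij}$ and sends $\epsilon$ to $1$. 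The step I expect to be the real obstacle is the Clifford relation, which demands an exact grip on $\ast$ in low degrees — hence on the braiding $\s$ and on the value of $m$, including the diagonal case — whereas the rest is bookkeeping with group-likes and the coproduct formulas.
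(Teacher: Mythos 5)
Your proposal is correct and follows essentially the same route as the paper: the only substantive point is the Clifford relation, which both you and the paper settle by the degree-one product formula $a\ast b=a\ot b+\s(a\ot b)+m(a\ot b)$ (you reach it via the quasi-shuffle isomorphism, the paper writes it directly), with $\s(v_i\ot v_j)=-v_j\ot v_i$ forcing the concatenation terms to cancel. The paper treats the $\epsilon$-relations, the $\xi$-commutativity and the coalgebra compatibility as immediate and omits them, whereas you spell them out; your caution about the diagonal case $i=j$ is in fact warranted and is glossed over in the paper's own proof.
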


\begin{proof}
The only thing which needs a proof is that in $Q_H(W)$, the following identity holds: for any $1\leq i\leq j\leq n$, $v_i\ast v_j+v_j\ast v_i=\xi_{ij}$. Indeed, $$v_i\ast v_j=v_i\ts v_j-v_j\ts v_i+m(v_i\ts v_j)=v_i\ts v_j-v_j\ts v_i+\xi_{ij},$$
$$v_j\ast v_i=v_j\ts v_i-v_i\ts v_j.$$
\end{proof}

\begin{theorem}
$\varphi$ is an isomorphism of Hopf algebras.
\end{theorem}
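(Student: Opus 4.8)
The plan is to show $\varphi$ is an isomorphism by exhibiting an inverse, or equivalently by showing it is both injective and surjective. Surjectivity is nearly immediate: $Q_H(W)$ is by definition the subalgebra of $T_H^c(W)$ generated by $H$ and $W$; since $H$ is spanned by $1,\epsilon$ and $W = U\ts H$ is spanned by the elements $v_i h, \xi_{ij}h$ with $h\in\{1,\epsilon\}$, and since $\varphi$ already hits $\epsilon$, each $v_i$, and each $\xi_{ij}$, the image of $\varphi$ is a subalgebra of $Q_H(W)$ containing a generating set, hence equals $Q_H(W)$. (One checks that $v_i\epsilon$ and $\xi_{ij}\epsilon$ are in the image as $\varphi(v_i\epsilon)=v_i\ast\epsilon$ up to the module action, using $\epsilon\cdot v_i=-v_i$.)

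The substance is injectivity. First I would identify $Q_H(W)$ using the structural results of Section 2. By Theorem on the smash structure, $T_H^c(W)\cong T_{\s,m}(W^R)\#H$, and restricting to the subalgebra generated by degree $0$ and $1$, $Q_H(W)\cong Q_{\s,m}(W^R)\#H$, where $Q_{\s,m}(W^R)$ is the subalgebra of the quantum quasi-shuffle algebra $T_{\s,m}(W^R)$ generated by $W^R$ in degree $1$ — i.e.\ the \emph{Nichols-type} (quasi-shuffle) algebra on the braided space $W^R$. Here $W^R = U$ with its Yetter--Drinfeld structure: $v_i$ is $\epsilon$-homogeneous with $\epsilon\cdot v_i=-v_i$, and $\xi_{ij}$ is $1$-homogeneous and central for the action. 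Computing the braiding $\s$ on $U$ and the induced partial multiplication $m$ (which sends $v_i\ot v_j\mapsto \xi_{ij}$ when $i\le j$, appropriately symmetrized, and is zero on all other pairs of generators), one finds that $T_{\s,m}(U)$ as an algebra is presented by exactly the relations $v_i\qqs v_j + v_j\qqs v_i = \xi_{ij}$ together with the $\xi$'s being central and the $\xi$'s multiplying freely — i.e.\ the degree-$1$-generated part is the free algebra on the $v_i$ modulo only the Clifford-type quadratic relations, with the $\xi_{ij}$ appearing as the (central) values of the partial product. Bosonizing by $H=\mathbb{K}[\mathbb{Z}/2]$ reinstates $\epsilon$ with $\epsilon^2=1$, $\epsilon v_i\epsilon^{-1}=-v_i$, $\epsilon\xi_{ij}\epsilon^{-1}=\xi_{ij}$. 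Comparing with the defining presentation of $Cl(V)$, these are precisely the same generators and relations; hence the surjection $\varphi$ between algebras with matching presentations must be an isomorphism. Concretely, I would construct the inverse $\psi:Q_H(W)\to Cl(V)$ by sending the degree-$0$ part $H$ via $\epsilon\mapsto\epsilon$, and $v_ih\mapsto v_i\epsilon^{\,|h|}$, $\xi_{ij}h\mapsto \xi_{ij}\epsilon^{\,|h|}$ on degree $1$, checking it respects $\ast$ (this uses the explicit formula $x\ast y$ on low degrees, namely $a\ast b = a\ot b + \beta(a\ot b) + m(a\ot b)$ for $a,b\in W$, together with the module actions recorded in the construction of $W$), and verifying $\psi\circ\varphi=\id$ on generators of $Cl(V)$ and $\varphi\circ\psi=\id$ on generators of $Q_H(W)$.

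The Hopf-algebra compatibility comes for free once $\varphi$ is known to be an algebra isomorphism: the coalgebra structures on both sides are determined on the generators $\epsilon,v_i,\xi_{ij}$ by the formulas $\Delta(\epsilon)=\epsilon\ts\epsilon$, $\Delta(v_i)=\epsilon\ts v_i+v_i\ts 1$ (which matches $\Delta(v_i)=\delta_L(v_i)+\delta_R(v_i)$ in $T_H^c(W)$ since $\delta_L(v_i)=\epsilon\ts v_i$ and $\delta_R(v_i)=v_i\ts 1$), and $\Delta(\xi_{ij})=1\ts\xi_{ij}+\xi_{ij}\ts 1$ (matching $\delta_L(\xi_{ij})+\delta_R(\xi_{ij})$), so $\varphi$ is a coalgebra map on a generating set of a Hopf algebra and therefore a bialgebra map, hence a Hopf algebra map; its inverse is then automatically one too.

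The main obstacle I anticipate is the bookkeeping in identifying the quasi-shuffle algebra $T_{\s,m}(U)$ explicitly: one must verify that the partial product $m$ on $U$, extended to $T_{\s,m}(U)$ via $\qqs_\s$, produces \emph{no} relations beyond the quadratic Clifford ones — in particular that there is no collapse in higher degrees and that the $\xi_{ij}$ remain linearly independent and central. This is essentially a PBW/normal-form argument for the quasi-shuffle algebra on this particular small braided datum; alternatively one sidesteps it entirely by directly building the inverse map $\psi$ at the level of $Q_H(W)$ as sketched above, where the only nontrivial check is that $\psi$ is multiplicative on degree-$1$ elements, which reduces to the single computation $\psi(v_i\ast v_j+v_j\ast v_i)=\psi(\xi_{ij})$ — and this is exactly the identity verified in the preceding lemma, read backwards.
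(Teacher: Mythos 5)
Your surjectivity argument and the remark that the coalgebra compatibility is automatic on generators are fine and agree with the paper. The gap is in injectivity, and neither of your two routes closes it. Route (a) rests on the assertion that the degree-$1$-generated subalgebra of $T_{\s,m}(U)$ is presented by \emph{exactly} the quadratic Clifford relations, with the $\xi_{ij}$ central and multiplying freely and with no collapse in higher degrees. That assertion is the whole content of the theorem: for quantum (quasi-)shuffle algebras the subalgebra generated in degree $1$ is a Nichols-type algebra, and its relations are in general not determined by the quadratic ones. You flag this yourself as the ``main obstacle'' and do not carry out the normal-form/PBW argument. Route (b) does not sidestep the obstacle: $Q_H(W)$ is defined as a subalgebra of $T_H^c(W)$, not by a presentation, so prescribing $\psi$ on the generators $\epsilon$, $v_ih$, $\xi_{ij}h$ does not yield a well-defined algebra map until you know that \emph{every} relation these elements satisfy inside $T_H^c(W)$ is respected --- and determining those relations is precisely what injectivity of $\varphi$ amounts to. Verifying $\psi(v_i\ast v_j+v_j\ast v_i)=\psi(\xi_{ij})$ only controls degree $2$ and says nothing about potential extra relations in higher degrees, so the ``alternative'' is circular.

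For contrast, the paper avoids computing a presentation of $Q_H(W)$ altogether. It quotients both sides by the Hopf ideal generated by the $\xi_{ij}$; since the multiplication $m$ then disappears, the induced map $\overline{\varphi}:\bigwedge(V)\to Q_H(W)/J$ is the known identification of the relevant Nichols algebra (for the sign braiding) with the exterior algebra, hence an isomorphism. This forces $\ker\varphi$ into the central subalgebra $C$ generated by the $\xi_{ij}$, which is a polynomial algebra, and a separate check (using that $m$ vanishes on the span of the $\xi_{ij}$) shows $\varphi|_C$ maps $C$ isomorphically onto a symmetric algebra, so $\ker\varphi=0$. To repair your write-up, either import this two-step reduction or genuinely prove the normal-form statement for $T_{\s,m}(U)$ that route (a) requires.
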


\begin{proof}
Since $Q_H(W)$ is generated by $H$ and $W$, $\varphi$ is surjective.
\par
We show the injectivity: firstly we take the quotient of $Cl(V)$ by the Hopf ideal generated by $\xi_{ij}$ for $1\leq i\leq j\leq n$. This ideal is sent by $\varphi$ to a Hopf ideal $J$ in $Q_H(W)$ generated by $\xi_{ij}$ for $1\leq i\leq j\leq n$. Then $\varphi$ passes to the quotient to give a Hopf algebra surjection $\overline{\varphi}:\bigwedge(V)\ra Q_H(W)/J$ which is moreover an isomorphism.
\par
Therefore $\ker\varphi$ is included in the subalgebra $C$ of $Cl(V)$ generated by $\xi_{ij}$ for $1\leq i\leq j\leq n$, which is a polynomial algebra. Since the restriction of the multiplication $m$ on $W$ to the subspace spanned by these $\xi_{ij}$ is zero, the subalgebra of $Q_H(W)$ generated by $\xi_{ij}$ is a symmetric algebra. As a conclusion, the restriction of $\varphi$ to $C$ gives a Hopf algebra surjection
$$\vp|_C:C\ra\mathbb{K}[\xi_{ij}|\ 1\leq i\leq j\leq n],$$
hence $\vp|_C$ is injective and $\ker\varphi=\{0\}$.
\end{proof}

\subsection{Universal quantum groups}

We first recall the construction (with slight generalizations and modifications) in \cite{FR}.
\par
Let $\mathfrak{g}$ be a Kac-Moody Lie algebra associated to an integral matrix $C=(c_{ij})_{n\times n}\in M_n(\mathbb{Z})$ and $q\in\mathbb{K}^*$ not be $0$ and $1$. We let $I$ denote the index set $\{1,\cdots,n\}$.
\par
Let $H=\mathbb{K}[K_1^{\pm 1},\cdots,K_n^{\pm 1}]$ be the group algebra of the additive group $\mathbb{Z}^n$: it is a Hopf algebra, $W$ be the vector space spanned by $\{E_i,F_i,\xi_i|\ i\in I\}$ and $M=W\ts H$.
\par
The following abuse of notation will be applied hereafter: for $x\in W$ and $K\in H$, we shall write $x$ for $x\ts 1$ in $M$, and $xK$ for $x\ts K$ in $M$ for short.
\par
We define an $H$-Hopf bimodule algebra structure on $M$:
\begin{enumerate}
\item The right module and comodule structures are defined as follows: for $w\in W$ and $h,h'\in H$,
$$\delta_R(w\ts h)=\sum w\ts h_{(1)}\ts h_{(2)},\ \ (w\ts h)h'=w\ts hh'.$$

\item The left structures are given in the following way: on $W$, for any $i,j\in I$,
$$K_i.E_j=q^{c_{ij}}E_j,\ \ K_i.F_j=q^{-c_{ij}}F_j,\ \ K_i.\xi_j=\xi_j;$$
$$\delta_L(E_i)=K_i\ts E_i,\ \ \delta_L(F_i)=K_i\ts F_i,\ \ \delta_L(\xi_i)=K_i^2\ts \xi_i,$$
then the structure on $M=W\ts H$ is taken to be the one arising from the tensor product.
\item We define $m:M\ts M\ra M$ by: for any $K,K'\in H$, if $\lambda$ is the constant such that $K.F_j=\lambda F_j$,
$$m(E_iK\ts F_jK')=\delta_{ij}\lambda\xi_iKK';$$
and on any other elements not of the above form, $\alpha$ gives $0$.
\end{enumerate}

It is not difficult to show that $M$, with these structures, is an $H$-Hopf bimodule algebra (for example, see \cite{FR}).
\par
We let $T_H^c(M)$ denote the cofree Hopf algebra built on the $H$-Hopf bimodule algebra $M$ and $Q_H(M)$ be its sub-Hopf algebra generated by $H$ and $M$ whose multiplication will be denoted by $\ast$.

\begin{definition}
The Hopf algebra $Q_H(M)$ is called the universal quantum group associated to the matrix $C$.
\end{definition}

Some specializations of this universal object are of great interests:

\begin{enumerate}
\item Let $A$ be a symmetrizable Cartan matrix, $C=DA$ be its
symmetrization with $D=\text{diag}(d_1,\cdots,d_n)$ and
$\mathfrak{g}$ be the associated Kac-Moody Lie algebra. Suppose
that $q\in\mathbb{K}^*$ is not a root of unity. Let $J$ be the
ideal in $Q_H(M)$ generated by
$$\left\{\left.\xi_i-\frac{K_i^2-1}{q_i-q_i^{-1}}\right|\ i\in I\right\}\ \  \text{where}\ \  q_i=q^{d_i}.$$
The ideal $J$ is in fact a Hopf ideal and the following theorem is proved in \cite{FR}:
\begin{theorem}[\cite{FR}]
There exists an isomorphism of Hopf algebras $U_q(\mathfrak{g})\cong Q_H(M)/J$.
\end{theorem}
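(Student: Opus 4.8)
The plan is to show that the ideal $J\subset Q_H(M)$ is a Hopf ideal and that the quotient $Q_H(M)/J$ is isomorphic to $U_q(\mathfrak g)$ by exhibiting mutually inverse Hopf algebra maps. First I would verify that $J$ is a Hopf ideal: using the coproduct in $Q_H(M)$ one computes $\Delta(\xi_i)=1\ts\xi_i+\xi_i\ts K_i^2$ (the degree-$1$ element $\xi_i$ is a skew primitive coming from $\delta_L,\delta_R$), together with $\Delta(K_i)=K_i\ts K_i$, so that $\Delta\!\left(\xi_i-\frac{K_i^2-1}{q_i-q_i^{-1}}\right)$ lies in $J\ts Q_H(M)+Q_H(M)\ts J$; similarly $\ve$ kills the generator and $S$ preserves $J$. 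This makes $Q_H(M)/J$ a Hopf algebra.

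Next I would construct a Hopf algebra map $\Phi:U_q(\mathfrak g)\ra Q_H(M)/J$ by sending the standard generators $K_i^{\pm1}\mapsto K_i^{\pm1}$, $E_i\mapsto E_i$, $F_i\mapsto F_i$. The only thing requiring checking is that the defining relations of $U_q(\mathfrak g)$ are respected. The group-like relations and the $q$-commutation $K_iE_jK_i^{-1}=q_i^{a_{ij}}E_j$, $K_iF_jK_i^{-1}=q_i^{-a_{ij}}F_j$ follow from the left $H$-module structure on $W$. The crucial relation $E_i\ast F_j-F_j\ast E_i=\delta_{ij}\frac{K_i^2-1}{q_i-q_i^{-1}}$ is obtained exactly as in the Clifford case: from the abstract formula $x\ast y=x\ts y+\beta(x\ts y)+m(x\ts y)$ on degree-$1$ elements one gets $E_i\ast F_j=E_i\ts F_j-($braiding term$)+\delta_{ij}\xi_i$ and $F_j\ast E_i=($the matching braiding term$)$, so the difference is $\delta_{ij}\xi_i$, which equals $\delta_{ij}\frac{K_i^2-1}{q_i-q_i^{-1}}$ modulo $J$. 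The quantum Serre relations need to be verified inside $Q_H(M)/J$; here I would invoke the realization of $U_q(\mathfrak g)^{+}$ (and $U_q(\mathfrak g)^{-}$) as the Nichols/quantum shuffle algebra on the Yetter--Drinfeld module spanned by the $E_i$ (resp. $F_i$), which sits inside $T_{\s,m}(M^R)$ via the isomorphism of the previous section, so that the Serre relations hold automatically in the subalgebra generated by degree-$0$ and degree-$1$ elements.

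For the inverse, I would use the universal property or a direct triangular-decomposition argument: $Q_H(M)$ is generated by $H$, the $E_i$, the $F_i$ and the $\xi_i$, and modulo $J$ the $\xi_i$ are expressed through the $K_i$, so $Q_H(M)/J$ is generated by the images of the $U_q(\mathfrak g)$-generators; hence $\Phi$ is surjective. Injectivity I would get from a triangular decomposition of $Q_H(M)$: the sub-Hopf algebra generated by $H$ and the $E_i$ is the bosonization $U_q(\mathfrak n^{+})\# H$ by the structure theorem of Section~2 applied to the sub-Hopf-bimodule-algebra on $\mathrm{span}\{E_i\}\ts H$ (whose multiplication $m$ vanishes, giving the quantum shuffle algebra $=U_q(\mathfrak n^{+})$), and symmetrically for the $F_i$; combined with the degree filtration and the fact that the only new relation in $Q_H(M)$ beyond $U_q(\mathfrak n^{+})$, $U_q(\mathfrak n^{-})$ and $H$ comes from $m(E_iK\ts F_jK')$, one matches a PBW-type basis of $Q_H(M)/J$ with that of $U_q(\mathfrak g)$.

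The main obstacle is the injectivity step: one must show that imposing the relations encoded by $m$ (and then the quotient by $J$) does not collapse anything beyond what the triangular decomposition predicts, i.e. that $Q_H(M)/J$ has exactly the expected size. The cleanest route is to produce a PBW basis of $Q_H(M)$ directly from the cotensor-coalgebra grading together with the $U_q(\mathfrak n^{\pm})$ descriptions of the top-degree pieces, and then check that $\Phi$ carries the standard PBW basis of $U_q(\mathfrak g)$ onto it; alternatively, since $Q_H(M)/J$ is a quotient of $U_q(\mathfrak g)$ via a two-sided inverse on generators, it suffices to exhibit a surjection $U_q(\mathfrak g)\twoheadleftarrow Q_H(M)/J$ and invoke that $U_q(\mathfrak g)$ has no proper Hopf-algebra quotient restricting to the identity on $H\oplus\bigoplus_i(\mathbb K E_i\oplus\mathbb K F_i)$, which follows from its defining presentation.
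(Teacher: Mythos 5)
The paper itself contains no proof of this theorem: it is imported verbatim from \cite{FR}, so there is nothing internal to compare your argument against. Your architecture does match the strategy of \cite{FR} — check that $J$ is a Hopf ideal, map generators to generators, get the Serre relations from the Nichols/quantum-shuffle realization of $U_q(\mathfrak n^{\pm})$, and prove injectivity through the bosonization $Q_H(M)\cong(\text{subalgebra of }T_{\s,m}(M^R))\#H$ of Section 2 together with a triangular decomposition. Two steps, however, do not work as written.

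First, the cross relation. You claim the braiding terms in $E_i\ast F_j$ and $F_j\ast E_i$ ``match'' so that the plain commutator equals $\delta_{ij}\xi_i$. They do not. Passing to $M^R$ and the quasi-shuffle product, the degree-two part of $E_i\qqs_\s F_j-F_j\qqs_\s E_i$ is $E_i\ot F_j+\s(E_i\ot F_j)-F_j\ot E_i-\s(F_j\ot E_i)=(1-q^{c_{ji}})E_i\ot F_j-(1-q^{-c_{ij}})F_j\ot E_i$, which is nonzero already for $i=j$ since $c_{ii}=2d_i\neq 0$. (The Clifford analogy is misleading: there the cancellation happens only because $\epsilon\cdot v_j=-v_j$ produces exactly the sign of the anticommutator.) One must either take the correct $q$-weighted commutator or rescale the generators (e.g.\ match the standard $F_j$ with $F_jK_j$), which is also what reconciles $\xi_i$ — a $(K_i^2,1)$-skew-primitive, since $\Delta(\xi_i)=K_i^2\ts\xi_i+\xi_i\ts 1$, not $1\ts\xi_i+\xi_i\ts K_i^2$ as you wrote — with $\frac{K_i^2-1}{q_i-q_i^{-1}}$ rather than $\frac{K_i-K_i^{-1}}{q_i-q_i^{-1}}$. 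Until this is pinned down, $\Phi$ is not shown to be well defined. Second, your fallback for injectivity is not a proof: the assertion that $U_q(\mathfrak g)$ admits no proper Hopf-algebra quotient injective on $H\oplus\bigoplus_i(\gf E_i\oplus\gf F_i)$ does \emph{not} ``follow from its defining presentation'' — it is equivalent to $U_q(\mathfrak n^{\pm})$ being the Nichols algebra of its degree-one part, a theorem of Lusztig/Rosso valid only for $q$ not a root of unity, and it is precisely the nontrivial input. Moreover, to ``exhibit a surjection'' out of $Q_H(M)/J$ you cannot argue from a presentation of $Q_H(M)$ (none is given); you would need the universal property of Section 3.2 applied to $B=U_q(\mathfrak g)$, which requires producing an algebra map $f:U_q(\mathfrak g)\ra M$ in $\chb$ killing the coradical and $k(H)$ — that construction, not a formal generality, is where the work lies. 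Your first, PBW-matching route is the viable one, but it should be stated as resting on the Nichols-algebra identification of the extremal pieces together with the explicit cancellation analysis above.
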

\item We respect all situations in (1) but suppose that $q^l=1$ is
a primitive odd root of unity for $l\geq d_i$ for any $i\in I$.
Let $I$ be the Hopf ideal in $Q_H(M)$ generated by $J$ and
$K_i^l-1$ for $i\in I$. Using Theorem 15 in \cite{Ro} and applying
the proof of the theorem above in \cite{FR} one has the following
corollary:
\begin{corollary}
Let $u_q(\mathfrak{g})$ be the small quantum group associated to $\mathfrak{g}$ (Frobenius
kernel). There exists an isomorphism of Hopf algebra
$u_q(\mathfrak{g})\cong Q_H(M)/I$.
\end{corollary}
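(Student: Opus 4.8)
The plan is to mimic the proof of the preceding corollary for $u_q(\mathfrak{g})$, but now working with the full small quantum group rather than only its Borel part, using Rosso's Theorem 15 in \cite{Ro} as the structural input. First I would check that $I$ is indeed a Hopf ideal of $Q_H(M)$: the ideal $J$ was already shown to be a Hopf ideal in \cite{FR}, and for the extra generators one computes $\Delta(K_i^l-1)=K_i^l\ts K_i^l-1\ts 1=(K_i^l-1)\ts K_i^l+1\ts(K_i^l-1)$ together with $\varepsilon(K_i^l-1)=0$ and $S(K_i^l-1)=-K_i^{-l}(K_i^l-1)$, so $I$ is a coideal and $S(I)\subseteq I$; hence the quotient $Q_H(M)/I$ is a Hopf algebra.

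Next I would identify $Q_H(M)/I$ with $u_q(\mathfrak{g})$. Passing to the quotient, the images of the $\xi_i$ become $\frac{K_i^2-1}{q_i-q_i^{-1}}$ (as in the $U_q$ case), and the images of the $K_i$ satisfy $K_i^l=1$; the images of $E_i,F_i$ satisfy $E_i^l=F_i^l=0$ because these relations already hold in $u_q(\mathfrak{g})$ and, more to the point, one invokes Rosso's Theorem 15 which precisely describes the Nichols-algebra quotient giving the Frobenius kernel when $q$ is a primitive odd root of unity with $l\ge d_i$. Thus there is a surjective Hopf algebra map $\Psi:Q_H(M)/I\twoheadrightarrow u_q(\mathfrak{g})$ sending the classes of $K_i,E_i,F_i$ to the standard generators, and it remains to see it is injective. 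Here I would reuse the argument of \cite{FR}: by Theorem \ref{} (the $U_q(\mathfrak{g})\cong Q_H(M)/J$ statement) the algebra $Q_H(M)/J$ is already identified with $U_q(\mathfrak{g})$, and $Q_H(M)/I$ is its further quotient by the Hopf ideal generated by $K_i^l-1$; on the $U_q$ side this quotient is exactly the small quantum group by definition (or by Lusztig's construction of the Frobenius kernel), so $\Psi$ has trivial kernel.

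The main obstacle is the appeal to Theorem 15 of \cite{Ro}: one has to verify that Rosso's hypotheses (primitive odd root of unity, $l\ge d_i$ for all $i$, the relevant braiding on the Yetter--Drinfeld module $M^R$) match the present setup, so that the sub-Hopf algebra $Q_H(M)$ of the cofree Hopf algebra, after quotienting by $I$, really collapses to the finite-dimensional Nichols-algebra bosonization underlying $u_q(\mathfrak{g})$ — i.e. that no extra relations beyond those in $I$ survive and, conversely, that the quantum Serre relations and the nilpotency $E_i^l=F_i^l=0$ are consequences of $I$ in $Q_H(M)$. Once that compatibility is in place, everything else is the bookkeeping of transporting the $U_q(\mathfrak{g})\cong Q_H(M)/J$ isomorphism of \cite{FR} through the additional quotient, which is routine. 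I would therefore spend most of the write-up making the invocation of \cite{Ro} and \cite{FR} precise and merely indicate the Hopf-ideal check and the surjectivity as immediate.
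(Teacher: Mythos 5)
Your overall strategy — check that $I$ is a Hopf ideal, then combine Theorem 15 of \cite{Ro} with the argument proving $U_q(\mathfrak{g})\cong Q_H(M)/J$ in \cite{FR} — is exactly what the paper intends; the paper itself offers no more than this one-line citation, so on that level you are aligned. The Hopf-ideal computation for $K_i^l-1$ is fine.

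However, your injectivity argument as written has a genuine flaw. You propose to factor through the isomorphism $U_q(\mathfrak{g})\cong Q_H(M)/J$ and then quotient by the $K_i^l-1$. That theorem is stated (and is only true) under the hypothesis that $q$ is \emph{not} a root of unity, whereas here $q$ is a primitive odd root of unity, so you cannot invoke it; at a root of unity $Q_H(M)/J$ is already a proper quotient of $U_q(\mathfrak{g})$ because the nilpotency relations $E_\alpha^l=F_\alpha^l=0$ hold automatically in the degree-one-generated subalgebra of the cotensor construction (this is precisely the content of Rosso's Theorem 15 — these relations live in $Q_H(M)$ itself, not in the ideal $I$, contrary to what your ``main obstacle'' paragraph suggests when it asks that nilpotency be a consequence of $I$). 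Moreover, even if one could use that theorem, $U_q(\mathfrak{g})/(K_i^l-1)$ is \emph{not} $u_q(\mathfrak{g})$: the small quantum group is finite-dimensional and requires killing the $l$-th powers of all root vectors, which the quotient by the group-like relations alone does not achieve. The correct route is the one the paper gestures at: rerun the proof of the theorem of \cite{FR} from scratch in the root-of-unity setting, substituting Rosso's Theorem 15 (identifying the Nichols-algebra parts with $u_q^{\pm}$) for the non-root-of-unity identification with $U_q^{\pm}$, and only then impose $J$ and $K_i^l-1$ to obtain the cross relations and the finite torus. Your write-up should be reorganized around that substitution rather than around a further quotient of $U_q(\mathfrak{g})$.
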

\item If there is no restriction on the matrix $C\in M_n(\mathbb{Z})$ but $q$ is not a root of unity. Similar to the proof of Proposition 7 in \cite{FR}, it can be shown that the ideal generated by $\xi_i-P(K_i)$ for $i\in I$ and $P_i\in\mathbb{K}[K_1,\cdots,K_n]$ is a Hopf ideal if and only if there exist $\lambda_i\in\mathbb{K}$ such that $P(K_i)=\lambda_i(K_i^2-1)$. We suppose moreover that $\lambda_i\neq 0$ for any $i\in I$, then without loss of generality, we can suppose $\lambda_i=1$.
\par
The discussion above allows us to define the quantum group $U_q(C)$ associated to a general integral matrix $C$ as the quotient of the universal quantum group $Q_H(M)$ by the Hopf ideal generated by $\xi_i-K_i^2+1$ for $i\in I$.
\end{enumerate}

\section*{Acknowledgements}
This work was partially supported by the National Natural Science
Foundation of China (Grant No. 11201067).

\bibliographystyle{amsplain}

\end{document}